\documentclass[twocolumn]{autart}    % Enable this line and disable the 
\usepackage{amsmath}
\usepackage{tikz}
\usetikzlibrary{arrows.meta,positioning,calc,decorations.pathmorphing}

\usepackage{graphicx} % Required for inserting images
\usepackage{enumitem}
\usepackage{comment}
 \usepackage{algorithm}
 \usepackage{algpseudocode}

\usepackage{csquotes}
\usepackage{xcolor}
\usepackage{tikz}
\usepackage{amsmath,amssymb}
\usetikzlibrary{arrows.meta, positioning, calc}

\newtheorem{theorem}{Theorem}
\newtheorem{proposition}{Proposition}
\newtheorem{assumption}{Assumption}
\newtheorem{lemma}{Lemma}
\theoremstyle{definition}
\newtheorem{definition}{Definition}
\newtheorem{corollary}{Corollary}
\theoremstyle{remark}

\definecolor{gridblue}{RGB}{44,95,140}
\definecolor{softblue}{RGB}{232,240,248}
\definecolor{boxblue}{RGB}{214,228,242}
\definecolor{accentteal}{RGB}{45,140,140}
\definecolor{textdark}{RGB}{35,35,35}

\definecolor{controlblue}{RGB}{35,90,150}
\definecolor{darkblue}{RGB}{20,60,110}
\definecolor{signalorange}{RGB}{210,110,35}
\definecolor{softorange}{RGB}{255,241,228}
\definecolor{textdark}{RGB}{35,35,35}
\definecolor{softgray}{RGB}{245,247,249}

\begin{document}
\begin{frontmatter}
%\runtitle{Insert a suggested running title}  % Running title for regular 
                                              % papers but only if the title  
                                              % is over 5 words. Running title 
                                              % is not shown in output.

\title{ Certificate-based Synthesis of Coordinated Droop Control for Heterogeneous Radial Distribution Networks \thanksref{footnoteinfo}} 
\thanks[footnoteinfo]{*This work is funded through the CETPartnership’s ProRES project under the Joint Call 2024, co‑funded by the European Commission (Grant Agreement No. 101069750) and the Dutch Research Council NWO (File No. EP.1602.24.001).}
\thanks{\mbox{Email: {\small
\texttt{\{G.Pantazis33,M.S.T.Chong\}@tue.nl}.}}}
\author{Georgios Pantazis},    % Add the 
\author{Michelle Chong}             % e-mail address 

\address{Dynamics and Control Section, Department of Mechanical Engineering, Eindhoven University of Technology, the Netherlands}  % Please supply                                              

\begin{keyword}                           % Five to ten keywords,  
Droop Control; Heterogeneous Certificates, Multi-agent Systems, Optimization-based Controller Synthesis            % chosen from the IFAC 
\end{keyword}                             % keyword list or with the 
                                          % help of the Automatica 
                                          % keyword wizard

                         % Abstract of not more than 200 words.
\begin{abstract}
Voltage certificates for droop-controlled low voltage feeders are often constructed from global worst-case quantities. In heterogeneous feeders, such bounds can hide where voltage risk arises and become increasingly conservative downstream as feeder sensitivities accumulate. This paper shows that voltage certificates can be used not only to assess a given controller, but also to design it. Specifically, we derive deterministic all-time, bus-wise voltage envelopes that retain local disturbance bounds, droop slopes, inverter limits, and feeder dependent sensitivities, while recovering the worst-case certificate as a special case. To overcome the deterioration of these bounds induced by the network topology, we introduce a droop architecture with virtual coordination and affine feedforward compensation that reshapes the effective voltage sensitivity. A scaling transformation converts the joint controller and certificate design into a linear program which ensures forward invariance and the satisfaction of reactive power reserve constraints. The controller uses only selected communication links and guarantees the certified voltage and inverter bounds for all admissible disturbances. Our certificates and methodology are evaluated on two radial low voltage network benchmarks: a five-customer residential feeder and a 26-customer rural network comprising four feeders. The studies demonstrate tighter and more spatially informative certificates, while respecting inverter limits.
\end{abstract}

\end{frontmatter}

\section{Introduction}
The increasing penetration of distributed energy resources (DERs), such as photovoltaic systems (PVs) interfaced with power electronics, has substantially changed  the operation of low voltage (LV) distribution grids in the last decades.  In modern distribution grids, voltage profiles are increasingly affected by variations in active power generation and load consumption \cite{chong2019local}. These effects are especially pronounced in radial distribution networks, where voltage deviations accumulate and, thus,  downstream buses can experience larger voltage deviations than buses close to the substation. A widely used mechanism for inverter-based voltage regulation is droop control, where the reactive power command is adjusted as a function of the locally measured voltage deviation. Droop controllers are attractive because of their simplicity, scalability, and limited reliance on communication. In practice, however, inverters in a distribution feeder are heterogeneous, i.e.,  they may have different time constants, reactive power constraints and capacities, and disturbance levels. Moreover, the position of each inverter in the network topology can strongly affect its voltage profile. Consequently, voltage certificates derived using homogeneous worst-case quantities can be overly conservative. Furthermore, by replacing parameters, different per bus, with global worst-case constants, such certificates may obscure the spatial structure of the grid and lead to conservative controller design, particularly for networks with nonuniform impedances, loads, and inverter capabilities. This motivates the need for voltage regulation certificates that preserve the heterogeneous structure of the distribution grid. Such certificates can provide more informative safety guarantees and can reveal where conservatism arises along the feeder.

Droop Control, also known in the literature as local Volt/VAR control, has been studied in terms of equilibrium and
stability interpretations in
\cite{Farivar2013,Zhu2016,zhou2021reverse}. More recent approaches
adapt droop parameters and/or
learn local control actions
\cite{Yuan2024,Cheng2023,Ye2024,Glover2025}.
Safe reinforcement learning methods additionally impose structural
conditions on learned policies to guarantee closed-loop stability
\cite{Cui2022,Feng2024}, primarily seeking stable efficient
equilibria. Communication-based and feedback optimization methods use
online measurements and information exchange to reach or track
feasible, optimal, or fair operating points
\cite{Bolognani2019,Gan2016,Bernstein2019,Magnusson2019,Qu2020,Sun2022}. Learning-based coordination has also been
combined with safe gradient flows, and
online safety layers
\cite{Feng2023,Chen2023,Hou2025}. 

Robust affine decision rules address uncertainty in Volt/VAR control and have been considered in~\cite{Nazir2023,Shi2024}. In our setting, communication instead enables  a dynamic coordination law which is synthesized offline, i.e.,  no repeated online optimization or learning is required, and the resulting certificate simultaneously bounds the virtual states, voltages, and inverter commands for all times. Related approaches use barrier certificates and robust control for inverter-based microgrids~\cite{Kundu2019,Bouvier2022}. These works consider different microgrid dynamics and use sum-of-squares or invariant set computation, while online learning under uncertain grid topology is considered in~\cite{Yeh2024}. For the radial network model developed in~\cite{chong2019local,Chong2020}, existing deterministic all-time results certify a fixed local droop law using aggregated disturbance and controller bounds. This leaves open, for the considered network model, the use of heterogeneous deterministic all-time certificates as design variables for jointly synthesizing a dynamic coordinated controller under physical and communication constraints.

\subsection{Main Contributions}

Our main contributions with respect to the cognate literature are as follows:

\textbf{(1) Heterogeneous all-time certificates.}
We derive bus-wise disturbance margins and voltage certificates for
slope-restricted droop controllers while retaining local voltage
sensitivities, disturbance bounds, droop slopes and inverter limits. In contrast to the worst-case
certificate in \cite{chong2019local,Chong2020}, the proposed result
preserves the nodal information and recovers the homogeneous
certificate as a special case. This avoids combining worst-case
quantities attained at different buses and reveals which locations,
disturbances, or inverter limits determine the certified performance.
The certificate is therefore more informative for controller design
and for assessing where additional reactive power capability would
be most valuable.

\textbf{(2) Certificate-based coordinated control.}
We introduce a virtual droop architecture in which a coordination
matrix reshapes the effective voltage sensitivity, and affine
feedforward compensation reduces the residual effect of bounded
disturbances. Forward invariant state bounds provide 
all-time certificates on the virtual states, voltages, and reactive power commands. To circumvent the bilinear coupling between the coordination matrix gains and invariant set radii for each voltage, we then introduce a scaling matrix that allows us to recast the synthesis problem as a
linear program that jointly designs the controller, certificates, and invariant state bounds under prescribed
communication and inverter constraints. Learning-based controller designs
\cite{Yuan2024,Cui2022,Feng2024} enforce closed-loop stability
through structural restrictions on the learned policies. Our result
addresses a different guarantee, that is deterministic bus-wise all-time
bounds on the voltage, virtual state, and inverter command under
bounded disturbances. Compared to online feedback optimization
\cite{Bernstein2019,Magnusson2019,Qu2020} and robust affine
Volt/VAR designs \cite{Nazir2023,Shi2024}, our method is complementary. Specifically, it leverages the structure of a widely used radial feeder model \cite{Baran_Wu} and incorporates the improved heterogeneous all-time certificates (Theorem \ref{thm:heterogeneous-certificate-agent}) as design variables to jointly synthesize a dynamic controller with fixed parameters, feedforward compensation, and invariant state bounds offline (Theorem \ref{thm:neighborhood_sparse_synthesis}). Any solution then certifies the
bus-wise voltage and inverter command trajectories under all
admissible disturbances. Online implementation requires only the prescribed communication pattern and local droop dynamics, without
optimization iterations.

\section{ Problem Formulation}
\subsection{Notation}
Let \(\mathbb R^N_{\ge 0}\) and \(\mathbb R^N_{>0}\) denote the
nonnegative and positive orthants of \(\mathbb R^N\), respectively.
For vectors \(x,y\in\mathbb R^N\), inequalities such as
\(x\le y\) and \(x<y\) are understood componentwise. The vector of all
ones is denoted by \(\mathbf 1\), and \(\operatorname{diag}(x)\)
denotes the diagonal matrix with diagonal entries given by the vector
\(x\). For a vector \(x\), \(|x|\) denotes the componentwise absolute
value. For a matrix \(A\), \(|A|\) denotes the elementwise absolute
value, and \(A_{ij}\) denotes its \((i,j)\)-th entry. The spectral
radius of a square matrix \(A\) is defined as $\varrho(A):=\max\{|\lambda|:\det(\lambda I-A)=0)\}$. The induced infinity norm
is denoted by \(\|A\|_\infty\). For a symmetric matrix \(A\), the
relations \(A\succ0\), \(A\succeq0\), \(A\prec0\), and \(A\preceq0\)
denote positive definite, positive semidefinite, negative definite, and
negative semidefinite, respectively. We denote the complement of a set $C_i$ by $C_i^c$.

\begin{figure}[t]
\centering
\begingroup

\def\prosumerrow#1#2#3#4#5{%
\begin{scope}[shift={(0,#2)}]

    % Customer dashed box
    \draw[dline, fill=softblue]
        (3.15,-0.58) rectangle (6.95,0.58);

    \node[
        anchor=south east,
        text=textdark
    ] at (6.95,0.68) {Customer $#1$};

    % Inverter
    \node[box] (inv#1) at (4.40,0)
        {PV Inverter\\$\Sigma_{#1}$};

    % Branch from feeder to inverter
    \draw[line] (0,0) -- (inv#1.west);

    % Branch impedance
    \draw[imp] (1.05,0) -- (1.68,0);

    % Branch impedance label
    \node[
        small,
        anchor=north,
        text=textdark
    ] at (1.365,-0.14) {$#3$};

    % Customer power arrow
    \draw[arr] (1.90,-0.27) -- (2.85,-0.27);

    \node[
        small,
        anchor=north,
        text=textdark
    ] at (2.375,-0.40) {$#5$};

    % Local voltage label outside the customer box
    \node[
        small,
        anchor=south east,
        text=textdark
    ] at (3.05,0.05) {$#4$};

    % Wire from inverter to DC source
    \draw[line] (inv#1.east) -- (5.98,0);

    % DC source
    \draw[source] (6.20,0) circle (0.21);
    \node[tinylabel, text=textdark] at (6.20,0.11) {$+$};
    \node[tinylabel, text=textdark] at (6.20,-0.11) {$-$};

\end{scope}
}

\begin{tikzpicture}[
    scale=0.95,
    every node/.style={transform shape},
    font=\footnotesize,
    line/.style={
        draw=gridblue,
        line width=1.25pt
    },
    dline/.style={
        draw=gridblue,
        dashed,
        line width=1.0pt,
        rounded corners=1pt
    },
    arr/.style={
        draw=accentteal,
        ->,
        line width=1.1pt
    },
    imp/.style={
        draw=gridblue,
        line width=5.2pt,
        line cap=round
    },
    source/.style={
        draw=gridblue,
        line width=1.1pt,
        fill=white
    },
    box/.style={
        draw=gridblue,
        fill=boxblue,
        line width=1.0pt,
        rounded corners=2pt,
        minimum width=1.65cm,
        minimum height=0.78cm,
        align=center,
        text=textdark
    },
    small/.style={
        font=\scriptsize,
        inner sep=1pt
    },
    tinylabel/.style={
        font=\scriptsize,
        inner sep=0.5pt
    }
]

% =========================================================
% Distribution grid
% =========================================================
\draw[dline, fill=softblue]
    (-0.95,-0.50) rectangle (0.58,-6.85);
% Substation
\draw[source, fill=white] (0,0) circle (0.34);

\draw[line]
    (-0.19,0)
    .. controls (-0.08,0.16) and (0.08,-0.16) ..
    (0.19,0);

\node[
    anchor=west,
    text=textdark
] at (0.56,0) {Substation};

% =========================================================
% Main feeder
% =========================================================

% Upper feeder section
\draw[line] (0,-0.34) -- (0,-3.35);

% Lower feeder section:
% starts above v'_{N-1}, ends exactly at v'_N
\draw[line] (0,-5.15) -- (0,-6.45);

% Three dots representing omitted feeder sections
\fill[gridblue] (0,-4.10) circle (1.25pt);
\fill[gridblue] (0,-4.38) circle (1.25pt);
\fill[gridblue] (0,-4.66) circle (1.25pt);

% =========================================================
% Feeder voltage labels
% =========================================================

% Voltage measurement between the substation and Z_0
\node[
    small,
    anchor=east,
    text=textdark
] at (-0.10,-0.82) {$\hat v_0$};

% Customer connection voltages
\node[
    small,
    anchor=east,
    text=textdark
] at (-0.10,-1.70) {$\hat v_1$};

\node[
    small,
    anchor=east,
    text=textdark
] at (-0.10,-3.35) {$\hat v_2$};

% Voltage measurement before Z_{N-1}
\node[
    small,
    anchor=east,
    text=textdark
] at (-0.10,-5.35) {$\hat v_{N-1}$};

% Final customer node
\node[
    small,
    anchor=east,
    text=textdark
] at (-0.10,-6.45) {$\hat v_N$};

% =========================================================
% Feeder impedances
% =========================================================

% Impedance Z_0
\draw[imp] (0,-1.05) -- (0,-1.34);

\node[
    small,
    anchor=east,
    text=textdark
] at (-0.10,-1.195) {$Z_0$};

% Impedance Z_1
\draw[imp] (0,-2.25) -- (0,-2.56);

\node[
    small,
    anchor=east,
    text=textdark
] at (-0.10,-2.405) {$Z_1$};

% Final feeder impedance Z_{N-1}
\draw[imp] (0,-5.72) -- (0,-6.02);

\node[
    small,
    anchor=east,
    text=textdark
] at (-0.10,-5.87) {$Z_{N-1}$};

% =========================================================
% Feeder power-flow arrows
% =========================================================

% Power flow through Z_0
\draw[arr] (0.48,-0.98) -- (0.48,-1.48);

\node[
    small,
    anchor=west,
    text=textdark
] at (0.64,-1.23) {$P_0+jQ_0$};

% Power flow through Z_1
\draw[arr] (0.48,-2.22) -- (0.48,-2.82);

\node[
    small,
    anchor=west,
    text=textdark
] at (0.64,-2.52) {$P_1+jQ_1$};

% Power flow through Z_{N-1}
\draw[arr] (0.48,-5.68) -- (0.48,-6.22);

\node[
    small,
    anchor=west,
    text=textdark
] at (0.64,-5.95) {$P_{N-1}+jQ_{N-1}$};

% =========================================================
% Connection and voltage measurement points
% =========================================================

% Measurement point for v'_0
\fill[gridblue] (0,-0.82) circle (1.6pt);

% Customer connection points
\fill[gridblue] (0,-1.70) circle (1.6pt);
\fill[gridblue] (0,-3.35) circle (1.6pt);

% Measurement point for v'_{N-1}
\fill[gridblue] (0,-5.35) circle (1.6pt);

% Final node v'_N
\fill[gridblue] (0,-6.45) circle (1.6pt);

% =========================================================
% Customer rows
% =========================================================

% First customer moved downward to lengthen the section
% between the substation and the first customer
\prosumerrow{1}{-1.70}{Z'_0}{v_1}{\rho_1+jq_1}

\prosumerrow{2}{-3.35}{Z'_1}{v_2}{\rho_2+jq_2}

% Omitted middle customers
\node[text=textdark] at (4.85,-4.75) {$\vdots$};

% Last customer
\prosumerrow{N}{-6.45}{Z'_{N-1}}{v_N}{\rho_N+jq_N}

% =========================================================
% Legend
% =========================================================
\draw[
    draw=gridblue,
    fill=softblue,
    line width=0.9pt,
    rounded corners=2pt
] (1.45,-9.05) rectangle (6.55,-7.20);

\node[
    anchor=west,
    text=textdark
] at (1.72,-7.42) {Legend:};

% Line impedance
\draw[imp] (2.05,-7.85) -- (2.68,-7.85);

\node[
    small,
    anchor=west,
    text=textdark
] at (3.02,-7.85) {Line impedance};

% DC voltage source
\draw[source] (2.36,-8.30) circle (0.21);

\node[tinylabel, text=textdark]
    at (2.36,-8.19) {$+$};

\node[tinylabel, text=textdark]
    at (2.36,-8.41) {$-$};

\node[
    small,
    anchor=west,
    text=textdark
] at (3.02,-8.30) {DC voltage source};

% AC voltage source
\draw[source] (2.36,-8.70) circle (0.21);

\draw[
    draw=gridblue,
    line width=0.9pt
]
    (2.21,-8.70)
    .. controls (2.27,-8.57) and (2.45,-8.83) ..
    (2.51,-8.70);

\node[
    small,
    anchor=west,
    text=textdark
] at (3.02,-8.70) {AC voltage source};

\end{tikzpicture}
\endgroup

\caption{Radial LV feeder with \(N\) prosumers. The net complex power injection of Customer \(i\)
is
\(\rho_i+\mathrm{j}q_i\), where
\(\rho_i=\rho_{g,i}-\rho_{c,i}\) and
\(q_i=q_{g,i}-q_{c,i}\).
Along the feeder, \(P_i+\mathrm{j}Q_i\) flows through
\(Z_i=R_i+\mathrm{j}X_i\) towards connection point \(i+1\).}
\label{fig:lv-distribution-grid}
\end{figure}

\subsection{Radial Network Model}
We consider a model of a radial low voltage (LV) distribution grid with $N$ prosumers based on the Baran-Wu model, also known as DistFlow equations \cite{Baran_Wu}. Each prosumer $i \in \mathcal{N}=\{1, \dots, N\}$ is equipped with an inverter that can inject active and reactive power $\rho_{g, i}$ and $q_{g,i}$, respectively, based on renewable energy generation such as photovoltaics. Let $v_i$ denote the voltage magnitude at customer $i$, and $\hat{v}_i$ the voltage magnitude at the corresponding connection point on the feeder.  The feeder segment between connection points \(i\) and \(i+1\) has impedance $Z_i = R_i + j X_i, i \in  \mathcal{N}_{-}=\{0, \dots, N-1\}$, while the impedance of the line between customer $i$ and its connection point is $Z'_{i-1} = R'_{i-1} + j X'_{i-1}, i \in \mathcal{N}$. Let $P_i$ and $Q_i$ denote the active and reactive power flowing in the feeder from connection point $i$ to the connection point of the next customer $i+1$. For each customer, define the net active power injection $\rho_i := \rho_{g,i} - \rho_{c,i}$,
and the net reactive power injection $q_i := q_{g,i} - q_{c,i}$. Since losses are typically much smaller than power flows $P_i$ and $Q_i$, the relative error is small (typically of the order of  1\%) \cite{Baran_Wu}. Hence, the feeder satisfies the following power flow equations:
\[
\left\{
\begin{aligned}
    P_{i+1} &= P_i + \rho_{i+1}, \\
    Q_{i+1} &= Q_i + q_{i+1}, \\
    \hat v_{i+1}^2 &= \hat v_i^2 - 2\beta_i(P_i,Q_i), \\
    \hat v_i^2 &= v_i^2 - 2\beta'_{i-1}(\rho_i,q_i),
\end{aligned}
\right.
\]
for $i \in \mathcal{N}_{-}$, where $\beta_i(r,s) := R_i r + X_i s$, $\beta'_i(r,s) := R'_i r + X'_i s$ and setting  $\beta'_{-1}(\cdot,\cdot) \equiv 0$. The voltage at each customer $i \in \mathcal{N}$ is regulated by a local droop control through the inverter dynamics:
\begin{equation}
    \dot q_{g,i}(t)
    =
    -\frac{1}{\tau_i} q_{g,i}(t)
    +
    \frac{1}{\tau_i} K_i\!\bigl(\bar v^2 - v_i^2(t)\bigr), \nonumber 
\end{equation}
where \(\tau_i > 0\) is the time constant of each inverter $i$, $\bar{v}$ is the nominal voltage reference, and $K_i$ is the local droop nonlinearity which can be used to control the voltage through the reactive power. We impose the following assumption on the droop function: \\
\begin{assumption} \label{assum:sector-bounded} 
1) The droop function $K(y)=(K_1(y_1), \dots, K_N(y_N))^\top$ satisfies
\begin{align}
\frac{K_i(w)-K_i(v)}{w-v} \in [0, d_i], \nonumber 
\end{align}
for all $w,v \in \mathbb{R}$ with $w \neq v$ 
and $K_i(0)=0$ for each $i \in \mathcal{N}$. \\
2) The bound $|K_i(v)| \leq \bar{K}_i$ holds  for all $v \in \mathbb{R}$ and each $i \in \mathcal{N}$. \hfill $\square$
\end{assumption}
Assumption \ref{assum:sector-bounded} is standard in droop  control implementations. 
The sector boundedness captures the monotone response of the reactive power with a maximum slope \(d_i\), while the bound \(|K_i(v)|\le \overline K_i\) reflects the reactive power capacity limit of inverter \(i\). 

A commonly used example satisfying Assumption~\ref{assum:sector-bounded} is the piecewise-affine droop function used in
\cite{chong2019local,zhou2021reverse}. Similar to \cite{chong2019local}, we choose the design parameters such that $y_{\min,i}<y_{m,i}\leq 0\leq y_{n,i}<y_{\max,i}$.
The droop function is then defined as
\[
K_i(y)=
\begin{cases}
-\bar K_i,
    & y\leq y_{\min,i},\\[1mm]
-\left(
1-\dfrac{y-y_{\min,i}}
{y_{m,i}-y_{\min,i}}
\right)\bar K_i,
    & y_{\min,i}<y\leq y_{m,i},\\[3mm]
0,
    & y_{m,i}<y\leq y_{n,i},\\[1mm]
\dfrac{y-y_{n,i}}
{y_{\max,i}-y_{n,i}}\bar K_i,
    & y_{n,i}<y\leq y_{\max,i},\\[3mm]
\bar K_i,
    & y>y_{\max,i}.
\end{cases}
\]
The interval $(y_{m,i},y_{n,i}]$ is the so-called dead zone, within which no
reactive power response is requested. Outside the dead zone, the droop
output varies affinely with the voltage error until it reaches the
saturation values $-\bar K_i$ and $\bar K_i$. When the controller is
implemented at customer $i$, the scalar argument $y$ is evaluated at
$y=y_i=\bar v^2-v_i^2$.
Consequently, the breakpoint parameters $y_{\min,i}$, $y_{m,i}$,
$y_{n,i}$, and $y_{\max,i}$ are expressed in squared voltage units. 

When the function $K_i$ is slope restricted (Assumption~\ref{assum:sector-bounded}-1), it implies that
$\bar{K}_i \leq d_i(y_{m,i}-y_{\min,i})$ and
$\bar{K}_i \leq d_i(y_{\max,i}-y_{n,i})$. The saturation value $\bar K_i$ represents the reactive power capability
allocated to voltage regulation and is therefore constrained by the
inverter rating. In particular, if $\bar s_i$ is the
apparent power rating of inverter $i$ and $\rho_{g,i}(t)$ is its active power output at time $t$, the inverter capability imposes $\rho_{g,i}^2(t)+q_{g,i}^2(t)\leq \bar s_i^2$,
and hence $|q_{g,i}(t)|
\leq
\sqrt{\bar s_i^2-\rho_{g,i}^2(t)}$.
Accordingly, the saturation value must satisfy the condition
$\bar K_i
\leq
\sqrt{\bar s_i^2-\rho_{g,i}^2(t)}$.
For a time-invariant bound as the one used in Assumption~\ref{assum:sector-bounded}, $\bar K_i$ should not
exceed the minimum available reactive power headroom over the admissible
active power range. For example, if $\max_{t \geq 0}|\rho_{g,i}(t)|= \rho_{g,i}^{\max}$,
one may select $\bar K_i
\leq
\sqrt{
\bar s_i^2-
\left(\rho_{g,i}^{\max}\right)^2
}$. Consequently, the available reactive power decreases as the active power approaches the inverter rating. A nonzero reserve at peak generation can be ensured by selecting an inverter rating above the maximum active power output \cite{zhou2021reverse,Turitsyn2011}.

We now define for each customer  $i \in \mathcal{N}$ the squared voltage differences $y_i := \bar v^2 - v_i^2$ and $y_0 := \bar v^2 - \hat v_0^2$,
where $\hat{v}_0$ is the voltage on the head of the distribution grid. Furthermore, we define
    $\eta_i := v_{i-1}^2 - v_i^2$,
with \(v_0 := \hat v_0\). Then, it can  be shown that 
\begin{align} \label{eq:recursion}
y_i = y_{i-1} + \eta_i.   
\end{align}
Using the equations above, it holds that:
\begin{align}
    &\eta_i
    =
    (v_{i-1}^2 - \hat v_i^2) + (\hat v_i^2 - v_i^2) \nonumber \\
    &=
    2\beta_{i-1}(P_{i-1},Q_{i-1})
    +
    2\beta'_{i-2}(\rho_{i-1},q_{i-1})
    -
    2\beta'_{i-1}(\rho_i,q_i). \nonumber 
\end{align}
Hence the closed-loop system is equivalently described by the dynamics:
\[
\left\{
\begin{aligned}
    \dot q_{g,i}
    &=
    -\frac{1}{\tau_i} q_{g,i}
    +
    \frac{1}{\tau_i} K_i(y_i), \\
    y_i &= y_{i-1}+\eta_i .
\end{aligned}
\right.
\]
Let us introduce the vectors  $q_g := (q_{g,1},\dots,q_{g,N})^\top$, and $ 
y:= (y_1,\dots,y_N)^\top$. 
 Imposing $P_N = Q_N = 0$, by assuming that the network ends after customer $N$,
and solving recursively  for \(P_i\) and \(Q_i\) we obtain for each node $i \in \mathcal{N}$: 
\begin{align} \label{eq:Pi}
    P_i &= -\sum_{j=i+1}^{N} \rho_j, \ Q_i = \sum_{j=i+1}^{N} (q_{c,j} - q_{g,j}), 
\end{align}
Substituting these expressions into \(\eta_i\) yields:
\begin{align}
    \eta_i
    &=
    -2R_{i-1}\sum_{j=i}^{N}\rho_j
    +
    2X_{i-1}\sum_{j=i}^{N}(q_{c,j}-q_{g,j}) \nonumber\\
    &\quad
    +
    2\beta'_{i-2}(\rho_{i-1},q_{i-1})
    -
    2\beta'_{i-1}(\rho_i,q_i). \nonumber 
\end{align}

Separating the controllable term $q_g$ from the exogenous terms $(\rho,q_c)$ and $y_0$ and then summing the recursion $y_i = y_0 + \sum_{k=1}^{i}\eta_k$, for each $i \in \mathcal{N}$,
the output dynamics take the form:
\begin{equation}
    y_i
    = (H q_{g})_i
    +
    [\phi(\rho,q_c)]_i
    +
    y_0, \nonumber 
\end{equation}
where $(Hq_g)_i \in \mathbb{R}$ denotes the $i$-th component of the vector $Hq_g \in \mathbb{R}^N$
and 
 $H=(h_{ij})_{i,j\in\mathcal N}$ is defined as:
\begin{align}
H
=
-2
\begin{bmatrix}
X_0 & X_0 & \cdots & X_0 \\
X_0 & X_0+X_1 & \cdots & X_0+X_1 \\
\vdots & \vdots & \ddots & \vdots \\
X_0 & X_0+X_1 & \cdots & \displaystyle\sum_{k=0}^{N-1}X_k \nonumber 
\end{bmatrix} \\
-
2\operatorname{diag}(X'_0,\dots,X'_{N-1}). \nonumber 
\end{align}

The term \(\phi(\rho,q_c)\) collects all contributions of net active power injections and consumptions.  Writing the derivation of $\phi$ in \cite{chong2019local} in analytical form, its \(i\)-th component can be written as
\begin{align}
    [\phi(\rho,q_c)]_i
    =
    \sum_{k=0}^{i-1}
    \left(
        2X_k \sum_{j=k+1}^{N} q_{c,j}
        -
        2R_k \sum_{j=k+1}^{N} \rho_j
    \right) \nonumber \\
    -2R'_{i-1}\rho_i
+2X'_{i-1}q_{c,i}. \nonumber 
\end{align}
 Finally, defining the diagonal matrix $T := \mathrm{diag}(\tau_1,\dots,\tau_N)$,
the closed-loop system takes the compact form:
\begin{align}
    \dot q_g &= -T^{-1} q_g + T^{-1} K(y), \label{state_dynamics} \\
    y &= Hq_g + \phi(\rho,q_c) + y_0 \mathbf{1}, \label{output_dynamics}
\end{align}
where  $K(y) := (K_1(y_1),\dots,K_N(y_N))^\top$. Let us now define the matrix $\widetilde H := (\widetilde h_{ij})_{i,j\in\mathcal N}$, where 
$\widetilde h_{ij}:=|h_{ij}|.$
Since for reactances it always holds $X_k\ge 0$ and $X'_k\ge 0$, each element $\widetilde h_{ij}$ can be written as :
\[
\widetilde h_{ij}
=
2\sum_{k=0}^{\min\{i,j\}-1}X_k
+
2X'_{i-1}\mathbf 1_{\{i=j\}}, \ i,j\in\mathcal N .
\]

\subsection{Problem Statement}

In this paper, we achieve the following objectives:
\begin{enumerate}
    \item[(i)] First, Section~3 derives heterogeneous voltage certificates for standard local droop control.
    \item[(ii)] Second, Section~4 introduces virtual coordination and affine feedforward compensation and shows how these mechanisms reshape the voltage certificates while preserving forward invariance and inverter feasibility.
    \item[(iii)] Third, Section~5 converts the resulting certificate conditions into a linear synthesis problem under prescribed communication and extends the framework to sparse inverter placement.
\end{enumerate}

\section{Heterogeneous voltage certificates}
In this section, we provide heterogeneous voltage certificates for each customer. To this end, we impose the following assumption: \\

\begin{assumption}
\label{assum:exogenous-bounds}
For each customer \  $i \in \mathcal{N}$, the net active power injection
$\rho_i(t)$ and the reactive power consumption $q_{c,i}(t)$ are
bounded, i.e., there exist
$\bar\rho_i\ge 0$ and $\bar q_{c,i}\ge 0$ such that $|\rho_i(t)|\le \bar\rho_i$, and 
$|q_{c,i}(t)|\le \bar q_{c,i}$ for all $ t\ge 0.$ \hfill $\square$
\end{assumption}

Assumption~\ref{assum:exogenous-bounds} is reasonable in practice.
As discussed in Section~2.2,  the net active power injection
\(\rho_i=\rho_{g,i}-\rho_{c,i}\) and the reactive power consumption
\(q_{c,i}\) admit finite bounds.  The following result shows how one can leverage the model presented in Section 2 to obtain bounds on the exogenous term of each customer.

\begin{lemma}
\label{lem:exogenous-bound-customer-i}
Consider the output dynamics in (\ref{output_dynamics}).
Under Assumption~\ref{assum:exogenous-bounds}, for each $i \in \mathcal{N}$ and for all $t \geq 0$
it holds that
$|\phi_i(\rho(t),q_c(t))|
\le
\bar\phi_i$,
where
\begin{align}
\bar\phi_i
:=  
2\sum_{k=0}^{i-1}
\left(
R_k\sum_{j=k+1}^{N}\bar\rho_j
+
X_k\sum_{j=k+1}^{N}\bar q_{c,j}
\right)+ \nonumber  \\ \nonumber 
+
2R'_{i-1}\bar\rho_i
+
2X'_{i-1}\bar q_{c,i}. \ \ \ \ \ \ \ \  \  \square
\end{align}
\end{lemma}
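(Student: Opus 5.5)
The bound follows from the explicit analytical expression for $[\phi(\rho,q_c)]_i$ given in Section~2, the triangle inequality, and Assumption~\ref{assum:exogenous-bounds}. Fix $i\in\mathcal N$ and $t\ge 0$, and write $\phi_i=\phi_i(\rho(t),q_c(t))$. This component is a finite linear combination of the quantities $\rho_j(t)$ and $q_{c,j}(t)$, $j\in\mathcal N$, whose coefficients are built from $R_k,X_k,R'_{i-1},X'_{i-1}$. So it suffices to bound each term in absolute value and sum the results.

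\textbf{Step 1.} Apply the triangle inequality to
\[
\phi_i=\sum_{k=0}^{i-1}\Big(2X_k\sum_{j=k+1}^N q_{c,j}-2R_k\sum_{j=k+1}^N\rho_j\Big)-2R'_{i-1}\rho_i+2X'_{i-1}q_{c,i}.
\]
This gives
\begin{align*}
|\phi_i|\le{}&\sum_{k=0}^{i-1}\Big(2|X_k|\sum_{j=k+1}^N|q_{c,j}|+2|R_k|\sum_{j=k+1}^N|\rho_j|\Big)\\
&+2|R'_{i-1}||\rho_i|+2|X'_{i-1}||q_{c,i}|.
\end{align*}

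\textbf{Step 2.} Remove the absolute values on the line parameters. Reactances satisfy $X_k,X'_k\ge 0$, as noted before the definition of $\widetilde H$. Resistances are physical line resistances, so $R_k,R'_k\ge 0$ as well. This nonnegativity of the line parameters is the only point needing care: it must be stated explicitly, since otherwise the bound would carry $|R_k|$ in place of $R_k$.

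\textbf{Step 3.} Insert the pointwise bounds $|\rho_j(t)|\le\bar\rho_j$ and $|q_{c,j}(t)|\le\bar q_{c,j}$ from Assumption~\ref{assum:exogenous-bounds}. Every coefficient multiplying these quantities is nonnegative, so the substitution preserves the inequality. Regrouping the $R_k$ and $X_k$ terms inside the sum over $k$ reproduces exactly $\bar\phi_i$. The bound $\bar\phi_i$ does not depend on $t$, so it holds for all $t\ge0$.

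The argument is essentially routine bookkeeping; the only subtlety is Step~2.
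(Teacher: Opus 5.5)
Your proof is correct and follows the paper's argument: you apply the triangle inequality to the explicit expression for $\phi_i$, use nonnegativity of $R_k, X_k, R'_{i-1}, X'_{i-1}$, and insert the pointwise bounds of Assumption~\ref{assum:exogenous-bounds}. The paper's appendix differs only in that it first re-derives that explicit expression from the recursion $y_i=y_0+\sum_{k=1}^{i}\eta_k$, telescoping the $\beta'$ terms and separating the $q_g$ contributions into $(Hq_g)_i$; you take the formula directly from Section~2, which is legitimate since the paper states it there.
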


\emph{Proof:} See Appendix. \hfill $\blacksquare$

Lemma~1 implies that bounded variations in customer power generation and load consumption produce a bounded voltage disturbance at each bus, which is quantifiable through the model. 
In practice, this allows the controller to use a known margin \(\bar{\phi}_i\) per bus. This bound constitutes an improvement over the worst-case bound used in \cite{chong2019local}. We also assume that the deviation of the voltage $\hat{v}_0$ at the head of the line from the desired voltage $\bar{v}$ is bounded, as stated below.
\begin{assumption}
\label{assum:feeder-head-voltage}
There exist constants
\(\underline y_0,\overline y_0\in\mathbb R\), with
\(\underline y_0\leq\overline y_0\), such that
$y_0(t)\in\mathcal Y_0
:=
[\underline y_0,\overline y_0]$,
\text{for all }$t\geq0$. As such, there exists a uniform bound
$\bar{\varepsilon}:=\max\{|\underline{y}_0|,|\overline{y}_0|\}$
such that $|y_0(t)|\leq\bar{\varepsilon}$ for all $t\geq0$.  \hfill $\square$
\end{assumption}
The following result then holds.\\ 

\begin{theorem}
\label{thm:heterogeneous-certificate-agent}
Consider the network model
\eqref{state_dynamics}--\eqref{output_dynamics} under
Assumptions~1--\ref{assum:feeder-head-voltage}.
Then, for every \(q_g(0)\in\mathbb R^N\), each
\(i\in\mathcal N\), and all \(t\geq0\), it holds that
\begin{align}
|y_i(t)|
\leq
\sum_{j=1}^{N}
\widetilde h_{ij}
\max\left\{
|q_{g,j}(0)|,m_j
\right\}
+\ell_i, \label{eq:bound1}
\end{align}
where
$\ell_j:=\bar\phi_j+\bar\varepsilon$, $\bar Q_j:=\max\left\{|q_{g,j}(0)|,\bar K_j\right\}$,$\gamma_j
:=
\sum_{k=1}^{N}\widetilde h_{jk}\bar Q_k+\ell_j$ and 
$m_j:=\min\left\{\bar K_j,d_j\gamma_j\right\}$, for all  $j\in\mathcal N$. \hfill $\square$
\end{theorem}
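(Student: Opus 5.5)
The plan is a two-stage bootstrap: first a coarse all-time bound on each reactive power state, then a bound on each output, which in turn sharpens the state bound.

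\emph{Step 1 (a priori state bound).} Each component of \eqref{state_dynamics} is the scalar linear system $\dot q_{g,j}=-\tau_j^{-1}q_{g,j}+\tau_j^{-1}u_j(t)$ with input $u_j(t)=K_j(y_j(t))$. Variation of constants gives
\[
q_{g,j}(t)=e^{-t/\tau_j}q_{g,j}(0)+\tau_j^{-1}\int_0^t e^{-(t-s)/\tau_j}K_j(y_j(s))\,ds ,
\]
which is a convex combination of $q_{g,j}(0)$ and values of $K_j$. By Assumption~\ref{assum:sector-bounded}-2, $|K_j|\le\bar K_j$, so
\[
|q_{g,j}(t)|\le e^{-t/\tau_j}|q_{g,j}(0)|+(1-e^{-t/\tau_j})\sup_{s\le t}|K_j(y_j(s))|\le \bar Q_j
\]
for all $t\ge0$.

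\emph{Step 2 (first output bound).} From \eqref{output_dynamics}, Lemma~\ref{lem:exogenous-bound-customer-i} and Assumption~\ref{assum:feeder-head-voltage},
\[
|y_j(t)|\le\sum_k|h_{jk}||q_{g,k}(t)|+\bar\phi_j+\bar\varepsilon\le\sum_k\widetilde h_{jk}\bar Q_k+\ell_j=\gamma_j .
\]

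\emph{Step 3 (refinement via the slope condition).} Assumption~\ref{assum:sector-bounded}-1 with $K_j(0)=0$ gives $|K_j(y)|\le d_j|y|$. Combined with the saturation bound and Step 2, $|K_j(y_j(t))|\le\min\{\bar K_j,d_j\gamma_j\}=m_j$ for all $t\ge0$. Repeating the convex-combination argument of Step 1 with this sharper input bound yields $|q_{g,j}(t)|\le\max\{|q_{g,j}(0)|,m_j\}$ for all $t$.

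\emph{Step 4 (final output bound).} Substituting the bound of Step 3 into \eqref{output_dynamics} exactly as in Step 2 gives \eqref{eq:bound1}.

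The only delicate point is the circularity: $K_j$ depends on $y_j$, which depends on all states. This is resolved by ordering the argument as above. Step 1 uses only the uniform saturation $\bar K_j$, so it needs no information about $y$. The bound $\gamma_j$ then holds on the whole time axis before it is used in Step 3, so no fixed-point or invariance argument is needed. The solution exists globally, since $K$ is globally Lipschitz with slope at most $d_j$ and \eqref{output_dynamics} is affine in $q_g$. I expect this to be the main thing to state carefully; everything else is routine triangle inequalities.
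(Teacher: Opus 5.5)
Your proposal is correct and follows the paper's proof essentially step for step. Both use variation of constants with the saturation bound to get $|q_{g,j}(t)|\le\bar Q_j$, then the crude output bound $\gamma_j$, then the slope-based refinement $|K_j(y_j)|\le m_j$, and finally a second pass through the convex-combination estimate to reach \eqref{eq:bound1}. Your remark on global existence of solutions, via the global Lipschitz property of $K$, is a small point of care that the paper leaves implicit.
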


\emph{Proof:} See Appendix. \hfill $\blacksquare$

Theorem \ref{thm:heterogeneous-certificate-agent} substantially improves upon the homogeneous certificate of Theorem 1 in~\cite{chong2019local}. Specifically, the homogeneous bound is
recovered from the heterogeneous certificate by replacing
the quantities parameterized by the inverter or bus index  $d_j$, $\overline K_j$, $\ell_j$,
and the rows of $\widetilde H$, by their worst case bound, as shown in Corollary \ref{cor:homogeneous-special-case-customer-i}. \\

\begin{corollary}
\label{cor:homogeneous-special-case-customer-i}
Under Assumptions \ref{assum:sector-bounded}-\ref{assum:feeder-head-voltage},
define
$d:=\max_{j\in\mathcal N}d_j$,
$\tau:=\max_{j\in\mathcal N}\tau_j$,
$\tau_m:=\min_{j\in\mathcal N}\tau_j$, and
$\bar K:=\max_{j\in\mathcal N}\bar K_j$. To match the notation of~\cite{chong2019local}, let
\(\bar{\epsilon}_y := \bar{\epsilon}\).
Moreover, let us define the homogeneous upper bounds on all $\phi_i$ and all $y_i$, respectively as
$\Delta_\phi:=\max_{j\in\mathcal N}\bar\phi_j,
\bar\ell:=\Delta_\phi+\bar\epsilon_y$, and let $\|\widetilde H\|_\infty
=
\max\limits_{r\in\mathcal N}\sum\limits_{s=1}^N \widetilde h_{rs}$.
Then, the certificate in Theorem \ref{thm:heterogeneous-certificate-agent} implies the certificate of Theorem 1 in \cite{chong2019local}, i.e., if the inequality
\begin{align}
&
\|\widetilde H\|_\infty
\|q_g(0)\|_\infty
+
\frac{d\tau}{\tau_m}
\|\widetilde H\|_\infty
\bar\ell
\nonumber\\
&\quad+
\frac{d\tau}{\tau_m}
\|\widetilde H\|^2_\infty
\bigl(\|q_g(0)\|_\infty+\bar K\bigr)
+
\bar\ell
\le
\varepsilon, \label{eq:bound2}
\end{align}
is satisfied for a preselected $\varepsilon$,
then $|y_i(t)|\le \varepsilon,$ for all $t\ge 0$. \hfill $\square$
\end{corollary}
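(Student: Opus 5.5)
The plan is to show that, under the homogeneous substitutions, the right-hand side of \eqref{eq:bound1} is bounded above by the left-hand side of \eqref{eq:bound2}. Then \eqref{eq:bound2} together with Theorem~\ref{thm:heterogeneous-certificate-agent} gives $|y_i(t)|\le\varepsilon$ for every $i\in\mathcal N$ and all $t\ge0$.

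First I will check that all quantities involved are nonnegative. Each $\widetilde h_{ij}\ge0$, and $\bar\phi_j\ge0$, $\bar\varepsilon\ge0$, $\bar K_j\ge 0$ and $d_j\ge0$. Hence $\gamma_j\ge0$ and $m_j\ge0$. This lets me use the elementary estimate $\max\{a,b\}\le a+b$ for $a,b\ge0$ throughout.

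The chain of estimates is then as follows.
\begin{enumerate}
\item Apply the estimate to the terms in \eqref{eq:bound1}:
$\max\{|q_{g,j}(0)|,m_j\}\le |q_{g,j}(0)|+m_j\le |q_{g,j}(0)|+d\gamma_j$,
using $m_j\le d_j\gamma_j\le d\gamma_j$.
\item Bound the first resulting sum by the row-sum norm:
$\sum_j\widetilde h_{ij}|q_{g,j}(0)|\le\|\widetilde H\|_\infty\|q_g(0)\|_\infty$.
\item Bound $\gamma_j$ uniformly in $j$. Since $\bar Q_k\le|q_{g,k}(0)|+\bar K_k\le\|q_g(0)\|_\infty+\bar K$ and $\ell_j\le\Delta_\phi+\bar\epsilon_y=\bar\ell$, we get
$\gamma_j\le\|\widetilde H\|_\infty(\|q_g(0)\|_\infty+\bar K)+\bar\ell$.
\item Bound the second resulting sum:
$\sum_j\widetilde h_{ij}\,d\gamma_j\le d\|\widetilde H\|_\infty\bar\ell+d\|\widetilde H\|_\infty^2(\|q_g(0)\|_\infty+\bar K)$.
\item Bound the remaining term by $\ell_i\le\bar\ell$.
\end{enumerate}

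Summing these estimates bounds the right-hand side of \eqref{eq:bound1} by
\begin{align*}
\|\widetilde H\|_\infty\|q_g(0)\|_\infty+d\|\widetilde H\|_\infty\bar\ell+d\|\widetilde H\|^2_\infty(\|q_g(0)\|_\infty+\bar K)+\bar\ell .
\end{align*}
Since $\tau=\max_j\tau_j\ge\min_j\tau_j=\tau_m$, we have $\tau/\tau_m\ge1$. Multiplying the two $d$-terms by this factor can only enlarge the expression, which produces exactly the left-hand side of \eqref{eq:bound2}. The conclusion then follows from Theorem~\ref{thm:heterogeneous-certificate-agent}.

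I do not expect a genuine obstacle. The only delicate step is replacing the heterogeneous maxima $\max\{|q_{g,j}(0)|,m_j\}$ and $\bar Q_k$ by sums, which relies on nonnegativity. I also have to make sure the factor $\tau/\tau_m$, which does not appear in the heterogeneous certificate, enters only as a loosening factor $\ge1$.
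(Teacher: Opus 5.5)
Your proof is correct and follows the paper's own argument: you use the same homogenizing estimates, namely the row-sum bound by $\|\widetilde H\|_\infty$, the uniform bound $\gamma_j\le\|\widetilde H\|_\infty(\|q_g(0)\|_\infty+\bar K)+\bar\ell$ obtained from $\bar Q_k\le\|q_g(0)\|_\infty+\bar K$ and $\ell_j\le\bar\ell$, and $\tau/\tau_m\ge1$ as a pure loosening factor. The only cosmetic difference is that you start from the stated bound \eqref{eq:bound1} and split $\max\{|q_{g,j}(0)|,m_j\}\le|q_{g,j}(0)|+m_j$, whereas the paper starts from the time-dependent bound $e_j(t)|q_{g,j}(0)|+(1-e_j(t))d_j\gamma_j$ inside the proof of Theorem~\ref{thm:heterogeneous-certificate-agent} and bounds both weights by one, which yields the same final expression.
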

\emph{Proof:} See Appendix. \hfill $\blacksquare$

\begin{figure}[t]
    \centering
    \includegraphics[width=0.83\columnwidth]{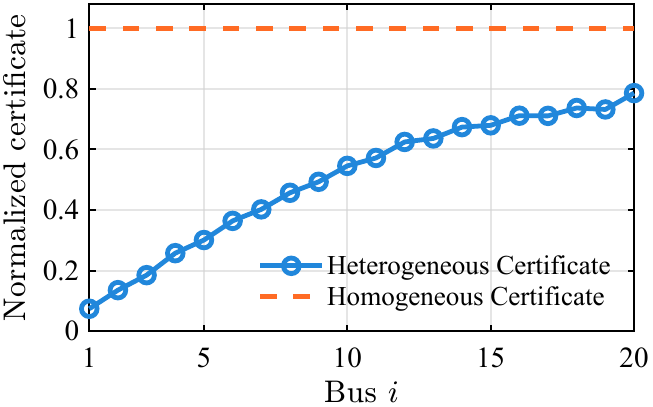}
    \caption{Comparison between the heterogeneous certificate of
    Theorem~1 and the corresponding homogeneous worst-case certificate in Theorem 1 in \cite{chong2019local}.}
    \label{fig:certificate-comparison-section3}
\end{figure}
\textbf{Example 1.} 
Consider the setting of Theorem~\ref{thm:heterogeneous-certificate-agent} for an $N$-bus radial feeder with
identical feeder reactances $X_k=x>0$, while allowing the reactances $X'_{i-1}$, disturbance margins $\ell_i$,
and the quantities $\max\{|q_{g,i}(0)|,m_i\}$ to be bus dependent.
Then, from the definition of $\widetilde H$, it holds that $\widetilde h_{ij}
    =
    2x\min\{i,j\}
    +
    2X'_{i-1}\mathbf 1_{\{i=j\}}.$
Hence Theorem~1 gives the bus dependent certificate in \eqref{eq:bound1}, while the corresponding homogeneous certificate yields the same certificate bound obtained by \eqref{eq:bound2}.
Figure~2 compares these two certificates after normalization by the
homogeneous bound. 

Specifically, for the numerical illustration, we consider a single radial feeder with
$N=20$ customers and identical feeder reactances
$X_k=x=2\times10^{-3}\,\Omega$. The customer reactances at the corresponding connection points are
selected within the range $X'_{i-1}\in[0.012,0.080]\ \Omega$,
while the quantities related to the inverter are chosen to satisfy
$\max\{|q_{g,i}(0)|,m_i\}\in[450,1000]\ \mathrm{VAr}$. Finally, the local disturbance margins satisfy
$\ell_i\in[1,5]\ \mathrm{V}^2$.
The parameters are chosen such that the largest row sum of
$\widetilde H$, the largest disturbance margin, and the largest inverter command bound occur at buses $20$, $4$, and $12$, respectively.
Thus, the homogeneous certificate combines worst-case quantities that
are attained at different locations. The resulting homogeneous bound is
$b^{\mathrm{hom}}=1005\ \mathrm{V}^2$, and both curves in
Fig.~\ref{fig:certificate-comparison-section3} are normalized by this
value.

The heterogeneous certificate of Theorem 1 preserves the heterogeneity in inverter dynamics, droop slopes, reactive power limits,
and disturbances on the customer side. Although it is less conservative than
the homogeneous certificate,  the certificate
still inherits the topological limitations of radial feeders, i.e., 
voltage deviations accumulate along the feeder through the sensitivity
matrix $H$. Consequently, customers located farther downstream may
receive larger certified voltage bounds, even when their local
disturbance and controller parameters are not more adverse. This motivates a controller design whose purpose is not only to reduce
the magnitude of the voltage certificate, but also to make the
certificate more uniform across customers.

\section{Coordinated Control Design}
\subsection{Certificate reshaping through virtual coordination}
We now consider the following virtual controller:
\begin{equation} \label{virtual_controller}
\left\{
\begin{aligned}
q_g(t) &= Gz(t) + \tilde{q}(t), \\
\dot z(t) &= -T^{-1}z(t) + T^{-1}K(y(t)), 
\end{aligned}
\right.
\end{equation}
where \(G\in\mathbb R^{N\times N}\) is a  coordination matrix,
\(z\in\mathbb R^N\) is the virtual droop state vector, and \(\tilde{q}(t)\) is a
feedforward controller to be designed. For notational convenience, define the aggregate exogenous voltage
disturbance $\delta(t)
:=
\phi\bigl(\rho(t),q_c(t)\bigr)+y_0(t)\mathbf 1.
$
Then, the output dynamics in \eqref{output_dynamics} can be written as $y(t)=Hq_g(t)+\delta(t)$. Based on the virtual controller (\ref{virtual_controller}), the output dynamics (\ref{output_dynamics}) take the form: 
\begin{align} \label{new_output}
y=F z(t)+r(t),
\end{align}
where $F=HG$ and $r(t)=H\tilde{q}(t)+\delta(t)$. We denote by $F_{ij}$ the element of the $i$-th row and $j$-th column of matrix $F$. The controller in \eqref{virtual_controller} should be interpreted as
a coordinated extension of local droop control by leveraging a coordination graph over the inverters described by $G$. This virtual controller can be used as a purely
local droop law in the special case when $G$ is chosen to be diagonal. Note that the nonlinear feedback component in the second part of the controller remains local in the
sense that each virtual droop state $z_i$ is driven by the local voltage
deviation $y_i$.  

The purpose of this section is to quantify the voltage certificates that can
be obtained when coordination of inverters on the feeder level is
available. The matrix $G$ can be fixed or tuned as a solution to an optimization problem (as shown in the developments of the current section and Section 5),
while $\tilde{q}$ can be tuned in a similar manner on the basis of measurements, forecasts, or disturbance envelopes. Figure~\ref{fig:coordinated_droop_scheme} illustrates the signal flow of the coordinated droop controller
in~\eqref{virtual_controller}, showing how local voltage feedback, virtual coordination, and
feedforward disturbance compensation are interconnected. For the case when $G$ is fixed by the designer and $H$ is invertible, we can choose
\begin{align}
G=H^{-1}W, \label{eq:G}
\end{align}
where $W$ is a preselected diagonal matrix. A simple choice of such a matrix is $W=-\alpha I$, with $\alpha$ chosen by the designer. This yields the output dynamics of the form: 
\begin{align}
y(t)=-\alpha z(t) + H \tilde{q}(t)+\delta(t).  \nonumber 
\end{align}

To show the invertibility of matrix $H$, we derive the following result. 

\begin{figure}[t]
\centering
\makebox[\columnwidth][c]{%
\resizebox{0.95\columnwidth}{!}{%
\begin{tikzpicture}[
    >=Latex,
    font=\small,
    every node/.style={text=textdark},
    titleblock/.style={
        draw=controlblue,
        fill=controlblue!8,
        rounded corners=3pt,
        line width=0.8pt,
        align=center,
        minimum width=5.2cm,
        minimum height=0.55cm,
        inner sep=3pt
    },
    mainblock/.style={
        draw=controlblue,
        fill=softblue,
        rounded corners=3pt,
        line width=0.9pt,
        align=center,
        minimum width=4.45cm,
        minimum height=0.95cm,
        inner sep=4pt
    },
    inputblock/.style={
        draw=controlblue,
        fill=softgray,
        rounded corners=3pt,
        line width=0.8pt,
        align=center,
        minimum width=3.05cm,
        minimum height=0.95cm,
        inner sep=3pt
    },
    feedbackblock/.style={
        draw=signalorange,
        fill=softorange,
        rounded corners=3pt,
        line width=0.9pt,
        align=center,
        minimum width=4.45cm,
        minimum height=0.95cm,
        inner sep=4pt
    },
    signal/.style={
        ->,
        draw=darkblue,
        line width=0.85pt
    },
    feedback/.style={
        ->,
        draw=signalorange,
        line width=0.95pt
    },
    siglab/.style={
        font=\scriptsize,
        fill=white,
        inner sep=1.2pt,
        text=textdark
    }
]

% ---------------------------------------------------------
% Bounding box for stable centering
% ---------------------------------------------------------
\path[use as bounding box] (-4.2,1.35) rectangle (4.2,-6.35);

% ---------------------------------------------------------
% Title
% ---------------------------------------------------------
\node[titleblock] (title) at (0,1.00) {%
    \textbf{Coordinated Virtual Control}%
};

% ---------------------------------------------------------
% Upper information blocks
% ---------------------------------------------------------
\node[inputblock] (model) at (-2.45,0) {%
    Feeder model
};

\node[inputblock] (dist) at (2.45,0) {%
    Measurements%
};

% ---------------------------------------------------------
% Main vertical chain
% ---------------------------------------------------------
\node[mainblock] (coord) at (0,-1.95) {%
    \textbf{Virtual coordination}\\[0.7mm]
    $q_g = Gz + \tilde{q}$%
};

\node[mainblock] (feeder) at (0,-3.65) {%
    \textbf{Voltage dynamics}\\[0.7mm]
    $y = Hq_g + \delta$%
};

\node[feedbackblock] (droop) at (0,-5.35) {%
    \textbf{Local droop feedback}\\[0.7mm]
    $\dot z_i = -\tau_i^{-1}z_i + \tau_i^{-1}K_i(y_i)$%
};

% ---------------------------------------------------------
% Input arrows with labels BESIDE the arrows
% ---------------------------------------------------------
\draw[signal]
    (model.south) -- ($(coord.north)+(-1.35,0)$)
    node[pos=0.58, left=6pt, siglab] {$G$};

\draw[signal]
    (dist.south) -- ($(coord.north)+(1.35,0)$)
    node[pos=0.58, right=6pt, siglab] {$\tilde{q}(t)$};

% Disturbance / residual input to feeder
\draw[signal]
    ($(dist.south)+(0.30,0)$) |- (feeder.east)
    node[pos=0.70, right=6pt, siglab] {$\delta(t)$};

% ---------------------------------------------------------
% Forward path
% ---------------------------------------------------------
\draw[signal]
    (coord.south) -- (feeder.north)
    node[pos=0.52, right=6pt, siglab] {$q_g$};

\draw[signal]
    (feeder.south) -- (droop.north)
    node[pos=0.52, right=6pt, siglab] {$y_i$};

% ---------------------------------------------------------
% Feedback path
% ---------------------------------------------------------
\draw[feedback]
    (droop.west) -- ++(-1.10,0) |- (coord.west)
    node[pos=0.27, left=6pt, siglab] {$z$};

\end{tikzpicture}%
}%
}
\caption{Coordinated Virtual Control Flowchart}
\label{fig:coordinated_droop_scheme}
\end{figure}

\begin{lemma} \label{lem:invertible}
If $X_i>0$ and $X'_i \geq 0$ for all $i \in \mathcal{N}_{-}$, then $H$ is negative definite.  \hfill $\square$
\end{lemma}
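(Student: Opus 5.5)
We show that $-H$ is positive definite by splitting it as $-H=2M+2D'$. Here $M_{ij}=\sum_{k=0}^{\min\{i,j\}-1}X_k$ and $D'=\operatorname{diag}(X'_0,\dots,X'_{N-1})$. This is exactly the entrywise description of $\widetilde h_{ij}$ given before the statement, since all entries of $H$ are nonpositive and $-H=\widetilde H$. Because $X'_k\ge 0$, the matrix $D'$ is positive semidefinite. It therefore suffices to prove $M\succ 0$, since the sum of a positive definite and a positive semidefinite matrix is positive definite.

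\textbf{Step 1: a Gram factorization of $M$.} Let $L\in\mathbb R^{N\times N}$ be the lower triangular all-ones matrix, $L_{ik}=1$ if $k\le i$ and $0$ otherwise, with columns indexed by $k=1,\dots,N$. Let $D_X=\operatorname{diag}(X_0,X_1,\dots,X_{N-1})$. Then
\[
(LD_XL^\top)_{ij}=\sum_{k=1}^{N}L_{ik}X_{k-1}L_{jk}=\sum_{k=1}^{\min\{i,j\}}X_{k-1}=M_{ij}.
\]
Hence $M=LD_XL^\top$. This encodes the physical fact that a path from the substation to bus $i$ and a path to bus $j$ share exactly the first $\min\{i,j\}$ feeder segments.

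\textbf{Step 2: positivity.} For any $x\in\mathbb R^N$,
\[
x^\top Mx=\sum_{k=1}^N X_{k-1}\Bigl(\sum_{i\ge k}x_i\Bigr)^2 \ge 0 .
\]
Since every $X_{k-1}>0$, equality forces every tail sum $\sum_{i\ge k}x_i$ to vanish. Differencing consecutive tail sums then gives $x=0$. Equivalently, $L$ is unit lower triangular and hence invertible, so $D_X\succ0$ implies $M\succ0$. Therefore $x^\top(-H)x=2x^\top Mx+2\sum_i X'_{i-1}x_i^2>0$ for all $x\neq 0$. This shows $H\prec0$, and in particular $H$ is invertible, which justifies the choice \eqref{eq:G}.

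\textbf{Main obstacle.} The only delicate point is the bookkeeping of the indices: $H$ is indexed by buses $1,\dots,N$, while the segment reactances are indexed $X_0,\dots,X_{N-1}$. One has to confirm that the sum in $M_{ij}$ runs over exactly $\min\{i,j\}$ segments, so that the factorization $M=LD_XL^\top$ holds with a square, invertible $L$. Once this is checked, the remaining steps are routine.
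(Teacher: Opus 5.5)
Your proof is correct and follows essentially the same route as the paper: it also writes $H=-2S-2\operatorname{diag}(X'_0,\dots,X'_{N-1})$, factors $S=BXB^\top$ with $B$ the unit lower-triangular all-ones matrix, and concludes $H\prec0$ from the invertibility of $B$, $X\succ0$ and $\operatorname{diag}(X'_0,\dots,X'_{N-1})\succeq0$. Your tail-sum identity $x^\top Mx=\sum_k X_{k-1}\bigl(\sum_{i\ge k}x_i\bigr)^2$ is simply $(L^\top x)^\top D_X (L^\top x)$ written out explicitly.
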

\emph{Proof:} See Appendix.  \hfill $\blacksquare$

An immediate consequence of Lemma \ref{lem:invertible} is that \(H\) is also invertible, hence a coordination matrix in relation \eqref{eq:G} can indeed be used to alleviate the downstream deterioration of the certificates offered in Section 3. 
For a fixed $G$ we now need to select the feedforward controller $\tilde{q}$. In contrast to Section 3, the aim of this section is not merely to certify a controller, but to use the certificates themselves to drive their design via the formulation of an appropriate optimization problem. To achieve this, we define the notion of forward invariance.

\begin{definition}
A set $\mathcal S\subseteq\mathbb R^N$ is called forward invariant for
the system $\dot x=f(x,t)$ if every solution starting in $\mathcal S$
remains in $\mathcal S$ for all future time. That is, if $x(0)\in\mathcal S$,
then $x(t)\in\mathcal S, \forall t\ge 0$. \hfill $\square$
\end{definition} 
We now impose the following assumptions:
\begin{assumption} \label{assum:res_bounds}
There exists a vector \(\tilde{\ell}\in\mathbb R^N_{\ge 0}\) such that $|r_i(t)|=|(H\tilde{q}(t)+\delta(t))_i|\le \tilde{\ell}_i, \forall t\ge 0,\ i\in\mathcal N$.
 \hfill $\square$
\end{assumption}

\begin{assumption} \label{assum:spectral_radius}
Let $D:=\operatorname{diag}(d_1,\ldots,d_N)$ and $A=D|F|$, where \(|F|\) denotes the elementwise absolute value of \(F\). Then, it holds that $\varrho(A)<1$, where
\(\varrho(\cdot)\) denotes the spectral radius. \hfill $\square$
\end{assumption}

The following result then establishes the forward invariance of the virtual droop states. We remind the reader that $|z(0)|$ denotes the elementwise vector of absolute values, i.e., $|z(0)|=(|z_i(0)|)_{i \in \mathcal{N}}$.

\begin{lemma} \label{lem:invariance}
Consider the virtual droop dynamics
in \eqref{virtual_controller} and the output dynamics in \eqref{new_output}. Suppose that Assumption \ref{assum:sector-bounded}, \ref{assum:res_bounds} and \ref{assum:spectral_radius} are satisfied.  Let \(r_z\in\mathbb R^N_{\ge0}\) be a decision variable $r_{z,i}\ge |z_i(0)|$ and $r_{z,i}
\ge
d_i\left(
\sum\limits_{j \in \mathcal{N}} |F_{ij}|r_{z,j}
+\tilde{\ell}_i
\right)$,
for any $i\in\mathcal N$. Then, the following statements hold: 
\begin{enumerate}
\item An admissible finite choice of \(r_z\in\mathbb R^N_{\ge0}\) is
$r_z
=
(I-D|F|)^{-1}
\left(
|z(0)|+D\tilde{\ell}
\right)$.
\item 
The set $\mathcal Z
:=
\left\{
z\in\mathbb R^N:\ |z_i|\le r_{z,i},\ i\in\mathcal N
\right\}$
is forward invariant, i.e., 
$|z_i(t)|\le r_{z,i},  \forall t\ge0,\ $ and any $i\in\mathcal N$. \ \ \ \ \ \ \ \ \hfill $\square$
\end{enumerate}
\end{lemma}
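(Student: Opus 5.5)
For statement 1, I would use that \(A=D|F|\) is entrywise nonnegative with \(\varrho(A)<1\) by Assumption~\ref{assum:spectral_radius}. The Neumann series \((I-A)^{-1}=\sum_{k\ge0}A^k\) then converges and is entrywise nonnegative. Hence \(r_z:=(I-A)^{-1}(|z(0)|+D\tilde\ell)\) is finite and nonnegative, and it satisfies \(r_z=A r_z+|z(0)|+D\tilde\ell\). Because \(A r_z\ge0\) and \(D\tilde\ell\ge0\), this identity gives \(r_z\ge|z(0)|\). Because \(|z(0)|\ge0\), it also gives \(r_z\ge D(|F|r_z+\tilde\ell)\), which is exactly the componentwise inequality in the hypothesis. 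So the choice is admissible.

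For statement 2, I would first settle well-posedness and then argue via Nagumo's conditions on the box \(\mathcal Z\). Well-posedness follows because \(K\) is globally Lipschitz and \(y=Fz+r(t)\) is affine in \(z\). The key pointwise estimate is as follows. For \(z\in\mathcal Z\), Assumption~\ref{assum:sector-bounded}-1 with \(K_i(0)=0\) gives \(|K_i(y_i)|\le d_i|y_i|\). Combining this with \(|y_i|\le\sum_j|F_{ij}||z_j|+|r_i(t)|\le\sum_j|F_{ij}|r_{z,j}+\tilde\ell_i\), which uses Assumption~\ref{assum:res_bounds}, we obtain \(|K_i(y_i(t))|\le r_{z,i}\).

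Now consider any face of the box where \(z_i=r_{z,i}\). There \(\dot z_i=\tau_i^{-1}(-r_{z,i}+K_i(y_i))\le0\). Symmetrically, on a face where \(z_i=-r_{z,i}\) we get \(\dot z_i\ge0\). Thus the vector field points into \(\mathcal Z\), or is tangent to it, on the boundary.

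The main obstacle is that the tangency is non-strict, so the Nagumo argument needs some care; the time dependence through \(r(t)\) adds to this. I would avoid it with a comparison argument on each coordinate separately. As long as \(z(s)\in\mathcal Z\) on \([0,t]\), variation of constants gives
\[
z_i(t)=e^{-t/\tau_i}z_i(0)+\int_0^t\tau_i^{-1}e^{-(t-s)/\tau_i}K_i(y_i(s))\,ds .
\]
Applying the bound \(|K_i(y_i(s))|\le r_{z,i}\) then yields \(|z_i(t)|\le e^{-t/\tau_i}r_{z,i}+(1-e^{-t/\tau_i})r_{z,i}=r_{z,i}\).

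To remove the standing assumption that \(z\) stays in \(\mathcal Z\), I would inflate the box. Take \(r_z^\epsilon:=(I-A)^{-1}(|z(0)|+D\tilde\ell+\epsilon\mathbf 1)\), which exceeds \(r_z\) strictly in every component. With this enlarged box the same estimate gives strict inequality \(|K_i|<r^\epsilon_{z,i}\) wherever \(z\in\mathcal Z^\epsilon\). Let \(t^*\) be the first exit time from \(\mathcal Z^\epsilon\). On \([0,t^*]\) the variation-of-constants bound applies, and it shows \(|z_i(t^*)|<r^\epsilon_{z,i}\) strictly, which contradicts the definition of \(t^*\). Hence the trajectory never leaves \(\mathcal Z^\epsilon\), and letting \(\epsilon\to0\) gives the claim for the \(r_z\) of statement 1.

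For a general admissible \(r_z\), one that only satisfies the inequalities, the same argument works with \(r_z+\epsilon(I-A)^{-1}\mathbf 1\) in place of \(r_z^\epsilon\). This vector strictly satisfies the inequalities because \((I-A)^{-1}\mathbf 1\ge\mathbf 1>0\).
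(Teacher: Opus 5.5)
Your proof is correct. Part 1 is essentially the paper's argument: Neumann series, entrywise nonnegativity of $(I-A)^{-1}$, and both inequalities read off from $r_z=Ar_z+|z(0)|+D\tilde\ell$. In part 2 you also share the paper's key estimate $|K_i(y_i)|\le d_i\bigl(\sum_j|F_{ij}|r_{z,j}+\tilde\ell_i\bigr)\le r_{z,i}$ on $\mathcal Z$, and the sign of $\dot z_i$ on the faces. The difference is in how you conclude invariance.

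The paper stops at ``the vector field points inward on every boundary face, hence $\mathcal Z$ is forward invariant''. This is an implicit Nagumo-type argument with only non-strict inequalities and a time-varying $r(t)$. It can be justified because $K$ is globally Lipschitz, so solutions are unique, but the paper does not spell this out. You instead combine the variation-of-constants bound (the device the paper uses for Theorem~\ref{thm:heterogeneous-certificate-agent}) with an $\epsilon$-inflated box carrying the uniform margin $|K_i|\le r^\epsilon_{z,i}-\epsilon$, a first-exit-time contradiction, and $\epsilon\to0$. This is self-contained and removes the non-strict tangency issue, at the cost of a longer argument.

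There is one trade-off. Your strictification of a general admissible $r_z$ via $r_z+\epsilon(I-A)^{-1}\mathbf 1$ uses $\varrho(D|F|)<1$ in part 2 as well. Also, the strict inequality $r'>D(|F|r'+\tilde\ell)$ comes from $(I-A)(I-A)^{-1}\mathbf 1=\mathbf 1$, not just from $(I-A)^{-1}\mathbf 1\ge\mathbf 1$ as you state. The paper's boundary argument for part 2 needs only admissibility of $r_z$. That independence lets the same reasoning carry over to Lemma~\ref{lem:neighborhood_forward_invariance}, where Assumption~\ref{assum:spectral_radius} is deliberately dropped. There, a Gronwall bound on the excess $\max\{|z_i(t)|-r_{z,i},0\}$ would give a rigorous version that needs neither Nagumo's theorem nor the spectral-radius condition.
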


\emph{Proof}: See Appendix. \hfill $\blacksquare$

Lemma \ref{lem:invariance} can be leveraged to provide certificates for the
closed-loop system voltages as follows.

\begin{theorem} \label{thm:preconditioned_voltage_certificates}
Consider the virtual droop dynamics
in \eqref{virtual_controller} and the output dynamics in \eqref{new_output}. Suppose that Assumptions \ref{assum:sector-bounded}, \ref{assum:res_bounds} and \ref{assum:spectral_radius} are satisfied. Furthermore, consider the setting of Lemma \ref{lem:invariance}. It then holds that for each $i \in \mathcal{N}$ and for all $t \geq 0$ $|y_i(t)|
\le
\tilde{b}_i$, where $\tilde{b}_i
    :=
    \sum\limits_{j=1}^N |(HG)_{ij}|r_{z,j}
    +
    \tilde{\ell}_i$. \hfill $\square$

\end{theorem}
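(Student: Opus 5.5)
The argument is a direct combination of the output relation \eqref{new_output} with the forward invariance of Lemma~\ref{lem:invariance}; no new dynamical analysis is needed.

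Under the virtual controller \eqref{virtual_controller}, the output satisfies $y(t)=Fz(t)+r(t)$ with $F=HG$ and $r(t)=H\tilde q(t)+\delta(t)$. Componentwise,
\begin{align}
y_i(t)=\sum_{j=1}^N (HG)_{ij} z_j(t) + r_i(t), \nonumber
\end{align}
for each $i\in\mathcal N$ and $t\ge0$. Applying the triangle inequality gives
\begin{align}
|y_i(t)|\le \sum_{j=1}^N |(HG)_{ij}|\,|z_j(t)| + |r_i(t)|. \nonumber
\end{align}

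The two terms are then bounded separately.

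\emph{Residual term.} Assumption~\ref{assum:res_bounds} gives $|r_i(t)|\le\tilde\ell_i$ for all $t\ge0$.

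\emph{State term.} We work in the setting of Lemma~\ref{lem:invariance}, with $r_z\ge|z(0)|$ satisfying the stated componentwise inequality; such an $r_z$ exists by item~1 under Assumption~\ref{assum:spectral_radius}. Item~2 of that lemma then yields $|z_j(t)|\le r_{z,j}$ for all $t\ge0$ and all $j$. Since every coefficient $|(HG)_{ij}|$ is nonnegative, each term of the sum can be bounded independently, which gives
\begin{align}
\sum_{j=1}^N |(HG)_{ij}|\,|z_j(t)|\le \sum_{j=1}^N |(HG)_{ij}|\,r_{z,j}. \nonumber
\end{align}

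Combining the two estimates yields $|y_i(t)|\le\tilde b_i$ for every $i\in\mathcal N$ and every $t\ge0$, which is the claim.

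The only substantive ingredient is Lemma~\ref{lem:invariance}, which is assumed. The one point to check is that the hypotheses of the theorem match those of the lemma, so that the invariance conclusion holds along the actual closed-loop trajectory. In particular, the solution must exist for all $t\ge0$. This follows because $K$ is globally Lipschitz with slopes in $[0,d_i]$ and bounded, and $r(t)$ is bounded.
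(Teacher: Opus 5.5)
Your proposal is correct and matches the paper's proof essentially step for step. Like the paper, you get $|z_j(t)|\le r_{z,j}$ from the forward invariance in Lemma~\ref{lem:invariance}, apply the triangle inequality to $y=HGz+r$, and bound the residual by Assumption~\ref{assum:res_bounds}; your extra remark on global existence of solutions is something the paper leaves implicit and does not change the argument.
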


\emph{Proof:} 
By Lemma \ref{lem:invariance}, the set 
\begin{align}
\mathcal Z
=
\left\{
z\in\mathbb R^N:\ |z_i|\le r_{z,i},\ i\in\mathcal N
\right\} \nonumber 
\end{align}
is forward invariant, i.e., for all \(t\ge 0\) and all
\(j\in\mathcal N\), $|z_j(t)|\le r_{z,j}$.
Using \eqref{new_output} and applying the triangle inequality
we have, for each $i\in\mathcal{N}$, that it holds
$|y_i(t)|
\le
\sum_{j=1}^N |F_{ij}|\,|z_j(t)|+|r_i(t)|$.
Using \(|z_j(t)|\le r_{z,j}\) and
Assumption \ref{assum:res_bounds}, we obtain
\[
|y_i(t)|
\le
\sum_{j \in \mathcal{N}} |(HG)_{ij}|r_{z,j}+\tilde{\ell}_i,
\]
where $F=HG$.
This concludes the proof. \hfill 
\(\blacksquare\) \\

Theorem~\ref{thm:preconditioned_voltage_certificates} applies to a general
coordination matrix \(G\). The following corollary specializes the
result to a coordination matrix that diagonalizes the effective voltage
sensitivity.

\begin{corollary} \label{cor:simplified}
\label{cor:inverse-sensitivity-certificate}
Let the conditions of
Theorem~\ref{thm:preconditioned_voltage_certificates} and
Lemma~\ref{lem:invariance} hold. Furthermore, let
$G=-\alpha H^{-1}, \alpha>0$, and assume that $\alpha d_i<1, i\in\mathcal N$.
Then an admissible exact choice of the invariant state bound for each $i \in \mathcal{N}$ is $
r_{z,i}
=
\max\left\{
|z_i(0)|,\,
\frac{d_i\widetilde\ell_i}{1-\alpha d_i}
\right\}$, and the corresponding voltage certificate is simplified to
\[
|y_i(t)|
\leq
\widetilde b_i
:=
\alpha
\max\left\{
|z_i(0)|,\,
\frac{d_i\widetilde\ell_i}{1-\alpha d_i}
\right\}
+
\widetilde\ell_i.
\]
If, additionally,  \(z_i(0)=0\), \(d_i=d\), and
\(\widetilde\ell_i=\bar r\) for every \(i\in\mathcal N\), then
\(\widetilde b_i=\dfrac{\bar r}{1-\alpha d}\). \hfill $\square$
\end{corollary}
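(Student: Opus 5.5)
The plan is to specialize Lemma~\ref{lem:invariance} and Theorem~\ref{thm:preconditioned_voltage_certificates} to the choice $G=-\alpha H^{-1}$, which Lemma~\ref{lem:invertible} makes well defined. First we compute $F=HG=-\alpha I$. Hence $|F|=\alpha I$ and $A=D|F|=\alpha D$. Since $\alpha d_i<1$ for every $i$, the diagonal matrix $\alpha D$ has spectral radius $\max_i \alpha d_i<1$. So Assumption~\ref{assum:spectral_radius} is automatically satisfied, and all hypotheses of Lemma~\ref{lem:invariance} hold.

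Next we show that the proposed $r_{z,i}$ is admissible in Lemma~\ref{lem:invariance}. With $|F_{ij}|=\alpha\mathbf 1_{\{i=j\}}$, the coupled constraints decouple into the scalar conditions
\[
r_{z,i}\ge |z_i(0)|,\qquad r_{z,i}\ge d_i\bigl(\alpha r_{z,i}+\widetilde\ell_i\bigr).
\]
Because $1-\alpha d_i>0$, the second condition is equivalent to $r_{z,i}\ge d_i\widetilde\ell_i/(1-\alpha d_i)$. The smallest value satisfying both conditions is therefore the stated maximum, so this choice is admissible and in fact the least (``exact'') one. The invariance statement of Lemma~\ref{lem:invariance} then yields $|z_i(t)|\le r_{z,i}$ for all $t\ge0$.

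The voltage certificate follows from Theorem~\ref{thm:preconditioned_voltage_certificates}. Substituting $|(HG)_{ij}|=\alpha\mathbf 1_{\{i=j\}}$ gives $\widetilde b_i=\alpha r_{z,i}+\widetilde\ell_i$, which is the displayed bound. For the homogeneous special case, $z_i(0)=0$ makes the maximum equal to $d\bar r/(1-\alpha d)$. Then
\[
\widetilde b_i=\frac{\alpha d\bar r}{1-\alpha d}+\bar r=\frac{\bar r}{1-\alpha d}.
\]

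There is no real obstacle here. The only point needing care is that Lemma~\ref{lem:invariance} is stated with a generic feasible $r_z$ rather than with the specific formula in its item~1. The proof should therefore invoke item~2 for any $r_z$ satisfying the inequalities, and should not rely on item~1. Item~1 would give $(|z_i(0)|+d_i\widetilde\ell_i)/(1-\alpha d_i)$, which is a larger, still valid bound, but not the sharper maximum claimed in the corollary.
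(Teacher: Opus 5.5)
Your proof is correct and follows the same route as the paper, which simply notes that $HG=-\alpha I$ and then invokes Lemma~\ref{lem:invariance} and Theorem~\ref{thm:preconditioned_voltage_certificates}. You also supply details the paper leaves implicit: $\varrho(\alpha D)=\max_i \alpha d_i<1$, so Assumption~\ref{assum:spectral_radius} holds automatically, and the sharper maximum needs item~2 of Lemma~\ref{lem:invariance} applied to a generic feasible $r_z$, not the formula of item~1.
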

\emph{Proof:}
The result follows directly from \(HG=-\alpha I\), together with
Lemma~\ref{lem:invariance} and
Theorem~\ref{thm:preconditioned_voltage_certificates}.  \hfill $\blacksquare$

Corollary~\ref{cor:inverse-sensitivity-certificate} shows that
\(G=-\alpha H^{-1}\) removes, if the inverter limits allow it, the feeder-induced downstream
deterioration from the certificate.
The resulting bound is then governed by the residual disturbance $\bar{r}$ and the
margin \(1-\alpha d\).
 
\subsection{Feedforward Design for  Inverter Limits}
Lemma 3 and Theorem 2 certify the unsaturated closed loop system, i.e., the case where the command $q_g(t)=Gz(t)+\tilde q(t)$ is applied exactly. They therefore provide bounds on $z(t)$ and $y(t)$, but do not by themselves guarantee that the commanded reactive power respects the physical inverter limits. If $q_g(t)$ exceeds the available reserve, inverter saturation may occur and the certified virtual dynamics no longer describe the implemented system. This motivates the reserve aware controller design below, which enforces $|q_{g,i}(t)|\le Q_i^{\max}$, while reducing the residual voltage disturbance.
We now show how the feedforward controller can be designed such that we obtain useful certificates, while we satisfy the reserve guarantees. To this end, consider the feedforward law:
\begin{equation}
\tilde{q}(t)=q_\phi+y_0(t)q_y,\label{eq:feedforward}
\end{equation}
where \(q_\phi,q_y\in\mathbb R^N\) are design variables. The following optimization problem $(P)$ is a certificate-based controller design problem for fixed \(G\). Its purpose is to illustrate how affine feedforward compensation
and inverter reserve constraints determine the residual voltage bound.
The synthesis in Section~5 generalizes this
setting by optimizing the coordination matrix itself under prescribed communication. Thus,  for fixed \(G\), we consider the following
optimization  (P).
\[
\label{eq:reserve_aware_feedforward_lp}
\left\{
\begin{alignedat}{2}
\min_{q_\phi,q_y,\mu,r_z}\quad
& \mu+\lambda\sum_{i=1}^N r_{z,i} \\[1mm]
\mathrm{s.t.}\quad
& -\mu
\le
\phi'_i+y'_0
+
\sum_{j \in \mathcal{N}} H_{ij}
\left(q_{\phi,j}+y'_0 q_{y,j}\right)
\le
\mu, \\[1mm]
&\sigma\bigl(q_{\phi,i}+y_0' q_{y,i}\bigr)
 +\sum_{j\in\mathcal N}|G_{ij}|r_{z,j}
 \leq Q_i^{\max},
\\[-1mm]
&\hspace{15mm}
\sigma\in\{-1,1\}. \\
&
r_{z,i}\ge |z_i(0)|, \\[1mm]
&
r_{z,i}
\ge
d_i
\left(
\sum_{j \in \mathcal{N}} |(HG)_{ij}|r_{z,j}
+
\mu
\right), \\[1mm]
&
\mu\ge 0,\quad r_{z,i}\ge 0, \\[1mm]
&
\forall \phi'_i\in\{-\bar{\phi}_i,\bar{\phi}_i\},
\quad
y'_0\in \{\underline{y}_0, \overline{y}_0\}, \ 
\forall i\in\mathcal N, 
\end{alignedat}
\right.
\]
 where
\(\lambda\ge0\) is an appropriately chosen weight and $\bar{\phi_i}$ is the bound obtained for $|\phi_i(t)|$ in Lemma 1. Performance and safety certificates are then obtained in Theorem \ref{thm:opt}. \\

\begin{theorem} \label{thm:opt}
Fix $G$, 
\(z(0)\), and \(d_i\ge 0\), for all \(i\in\mathcal N\). Under Assumptions 1, 2, 3 and 5,
 consider the feedforward controller in \eqref{eq:feedforward}. If problem (P) is feasible, then,
for any feasible  \((q_\phi,q_y,\mu,r_z)\), the controller $q_g(t)=Gz(t)+\tilde{q}(t)$ satisfies that:
 \begin{enumerate}[label=\roman*)]
 \item $|r_i(t)|\le \mu$ and  $|z_i(t)|\le r_{z,i}$, 
 \item $|y_i(t)|
    \le
    \sum\limits_{j \in \mathcal{N}} |(HG)_{ij}|r_{z,j}+\mu$ and \item $|q_{g,i}(t)|\le Q_i^{\max}$,
    \end{enumerate}
for all $ t\ge 0$ and for each $i \in \mathcal{N}$ . \hfill $\square$
\end{theorem}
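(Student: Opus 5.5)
The plan is to reduce Theorem~\ref{thm:opt} to Lemma~\ref{lem:invariance} and Theorem~\ref{thm:preconditioned_voltage_certificates} by showing that $\tilde\ell_i:=\mu$ is a valid residual bound in Assumption~\ref{assum:res_bounds}. The argument has three steps.

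\textbf{Step 1: the residual bound (item i, first part).} Substituting \eqref{eq:feedforward} gives
\[
r_i(t)=\phi_i(t)+y_0(t)+\sum_{j}H_{ij}\bigl(q_{\phi,j}+y_0(t)q_{y,j}\bigr).
\]
For fixed design variables this is affine in the pair $(\phi_i(t),y_0(t))$. By Lemma~\ref{lem:exogenous-bound-customer-i}, $\phi_i(t)\in[-\bar\phi_i,\bar\phi_i]$, and by Assumption~\ref{assum:feeder-head-voltage}, $y_0(t)\in[\underline y_0,\overline y_0]$. Note that $r_i$ depends on $\phi_i$ only through its own component, not on the whole vector $\phi$, so the relevant set is just this box in $(\phi_i,y_0)$. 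An affine function on a box attains its maximum and minimum at vertices. The first constraint of (P) enforces $-\mu\le r_i\le\mu$ at all four vertices $(\phi'_i,y'_0)$, hence $|r_i(t)|\le\mu$ for all $t\ge0$. This verifies Assumption~\ref{assum:res_bounds} with $\tilde\ell=\mu\mathbf 1$.

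\textbf{Step 2: invariance and the voltage bound (item i, second part, and item ii).} With $\tilde\ell=\mu\mathbf 1$, the constraints of (P) on $r_z$ are exactly the hypotheses of Lemma~\ref{lem:invariance}, namely $r_{z,i}\ge|z_i(0)|$ and $r_{z,i}\ge d_i(\sum_j|F_{ij}|r_{z,j}+\tilde\ell_i)$. Lemma~\ref{lem:invariance}(2) then gives $|z_i(t)|\le r_{z,i}$. Theorem~\ref{thm:preconditioned_voltage_certificates} then yields item ii directly.

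\textbf{Step 3: the inverter limit (item iii).} Write
\[
q_{g,i}=\sum_j G_{ij}z_j+q_{\phi,i}+y_0(t)q_{y,i}.
\]
The triangle inequality together with Step 2 gives
\[
|q_{g,i}|\le\sum_j|G_{ij}|r_{z,j}+|q_{\phi,i}+y_0(t)q_{y,i}|.
\]
The last term is the absolute value of an affine function of $y_0\in[\underline y_0,\overline y_0]$, so it is bounded by its value at an endpoint. The reserve constraint of (P), imposed for $\sigma=\pm1$ and both endpoints $y'_0$, bounds exactly $\pm(q_{\phi,i}+y'_0q_{y,i})+\sum_j|G_{ij}|r_{z,j}$ by $Q_i^{\max}$. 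Combining these gives $|q_{g,i}(t)|\le Q_i^{\max}$.

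The main subtlety is the hypotheses of Lemma~\ref{lem:invariance}, which include Assumption~\ref{assum:spectral_radius}. The theorem lists Assumption 5, but only part (2) of the lemma is needed. I expect that part (2) uses only the existence of a nonnegative $r_z$ satisfying the componentwise inequalities, via a boundary argument: at $z_i=r_{z,i}$, $\dot z_i\le\tau_i^{-1}(-r_{z,i}+d_i|y_i|)\le0$ using $K_i(0)=0$ and the slope bound. Here $|y_i|\le\sum_j|F_{ij}|r_{z,j}+\mu$ holds on the box, which makes it forward invariant; the symmetric argument applies at $z_i=-r_{z,i}$. If the lemma's proof does rely on $\varrho(A)<1$, that is covered by the stated assumption. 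If needed, I would reprove this invariance step inline with the above Nagumo-type argument, since feasibility of (P) already supplies the required $r_z$.
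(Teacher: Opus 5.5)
Your proposal is correct and follows essentially the same route as the paper: the vertex argument giving $|r_i(t)|\le\mu$, then Lemma~\ref{lem:invariance} with $\tilde\ell_i=\mu$ for invariance of the box, the triangle inequality for the voltage bound, and the convexity-in-$y_0$ endpoint argument for the reserve constraint. Your side remark is also accurate: the paper proves part (2) of Lemma~\ref{lem:invariance} with exactly the boundary argument you sketch, and that part does not use $\varrho(D|F|)<1$.
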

\emph{Proof:} i) Let \((q_\phi,q_y,\mu,r_z)\) be any feasible solution of problem
(P). Note that the residual $r_i(t)=\phi_i(t)+ y_0(t)+\sum_{j \in \mathcal{N}}H_{ij}(q_{\phi,j}+ y_0 q_{y, j})$ is an affine function of $y_0$ and $\phi_i(t)$. By Lemma 1, we have that $\phi_i(t)\in [-\bar{\phi_i}, \bar{\phi_i}]$ and by Assumption \ref{assum:feeder-head-voltage}, $y_0(t) \in [\underline{y}_0, \overline{y}_0 ]$. As such, feasibility for the constraint $|r_i(t)| \leq \mu$ for all $t \geq 0$ is ensured if the following constraints are satisfied:
\begin{align}
\left| \phi'+y'_0
    +
    \sum_{j \in \mathcal{N}} H_{ij}
    \left(
        q_{\phi,j}+y'_0 q_{y,j}
    \right) \right|
    \le
    \mu,
\end{align}
for all $\phi' \in \{-\bar{\phi}_i, \bar{\phi}_i\}$ and $y'_0 \in \{\underline{y}_0, \overline{y}_0\}$.
Consider now Assumption \ref{assum:spectral_radius} and the decision variable $r_{z,i}$ for which the constraints of Lemma \ref{lem:invariance} are satisfied with $\tilde{\ell}_i=\mu$ for each agent $i \in \mathcal{N}$.
Then, from Lemma \ref{lem:invariance}  the set $\mathcal{Z}$ is forward invariant, i.e., $|z_i(t)| \leq r_{z,i} \forall t \geq 0, i \in \mathcal{N}$. 

ii) As such, the bound on $|y_i(t)|$ is provided as follows:
\begin{align}
|y_i(t)| = &\left|\sum_{j \in \mathcal{N}}(HG)_{ij}z_j(t)+r_i(t)\right| \nonumber \\
\leq &  \sum_{j \in \mathcal{N}}|(HG)_{ij}||z_j(t)|+|r_i(t)| \nonumber \\
\leq &  \sum_{j \in \mathcal{N}}|(HG)_{ij}|r_{z,j}+\mu.
\end{align}
iii) It remains to show that the physical inverter constraints are satisfied.
For each $i\in\mathcal N$, due to \eqref{virtual_controller} and \eqref{eq:feedforward}, and using $|z_j(t)|\le r_{z,j}$ from $i)$, it follows that
\[
|q_{g,i}(t)|
\le
\sum_{j=1}^N |G_{ij}|r_{z,j}
+
|q_{\varphi,i}+y_0(t)q_{y,i}|.
\]
The two reserve constraints in $(P)$, imposed for both vertices
$y_0'\in Y_0$, give
\begin{equation}
|q_{\varphi,i}+y_0' q_{y,i}|
+
\sum_{j=1}^N |G_{ij}|r_{z,j}
\le
Q_i^{\max}. \label{eq:Q_max_con}
\end{equation}
Since the function in absolute values is convex in $y_0'$, the same inequalities
hold for every $y_0(t)\in[\underline y_0,\overline y_0]$. Hence, \eqref{eq:Q_max_con} ensures that
\[
\sum_{j=1}^N |G_{ij}|r_{z,j}
+
|q_{\varphi,i}+y_0(t)q_{y,i}|
\le
Q_i^{\max},
\]
and thus
$|q_{g,i}(t)|\le Q_i^{\max}, \forall t\ge 0,\ i\in\mathcal N$. \hfill $\blacksquare$

Problem $\mathrm{(P)}$ serves three main roles: 1) Its solutions produce a residual bound \(\mu\) replacing the bound \(\tilde\ell_i\) in Theorem~2. 2) It guarantees forward invariance of the virtual state set $\mathcal{Z}$ and through this property provides bounds for each $y_i$. 3) It enforces the physical reserve constraints \(|q_{g,i}(t)|\le Q_i^{\max}\). Thus, $\mathrm{(P)}$ ensures that the voltage certificate is compatible with the inverter limits. Since \(H\) is nonsingular by Lemma~2, exact cancellation is possible when \(\delta(t)\) is known. Specifically, choosing $\tilde{q}(t)=-H^{-1}\delta(t)$ gives \(r(t)=H\tilde{q}(t)+\delta(t)=0\). Considering the controller in \eqref{eq:feedforward}, this is obtained with $q_\phi=-H^{-1}\phi, q_y=-H^{-1}\mathbf 1$ . This exact cancellation choice is a constant feedforward law only when \(\phi(t)\) is known and constant; otherwise it would require an online update \(q_\phi(t)=-H^{-1}\phi(t)\). For bounded but unknown \(\phi(t)\), Problem~ (P) instead minimizes the worst-case residual \(\mu\).

% Requires:
% \usepackage{tikz}
% \usetikzlibrary{arrows.meta,positioning,fit,calc}

% Requires:
% \usepackage{tikz}
% \usetikzlibrary{arrows.meta,positioning,fit,calc}

% ============================================================
% Sparse column
% ============================================================

\section{Joint Synthesis of Coordinated Droop Control and Voltage Certificates}

Previously, we considered a coordinated droop controller with a fixed coordination matrix $G$ and a feedforward controller whose parameters can be obtained through an optimization problem. In this section, we depart from fixing $G$ a priori and instead treat it as a controller synthesis variable subject to a prescribed communication mask. The objective of this section is to jointly construct the coordinated controller and voltage certificates under this mask, while guaranteeing forward invariance and feasibility of the physical inverter limits. However, direct optimization over \(G\) and the invariant set radii \(r_z\) would lead to bilinear terms such as \(|G_{ij}|r_{z,j}\) in the optimization procedure. To avoid this, we
introduce an auxiliary design matrix $
U:=GR_z$,
where 
$R_z:=\operatorname{diag}(r_{z,1},\ldots,r_{z,N})$. We impose the following assumption: \\
\begin{assumption} \label{assum:r_z_pos}
For each bus $i \in \mathcal{N}$, $r_{z,i}$ is a strictly positive scalar. \hfill $\square$  
\end{assumption}
Under Assumption \ref{assum:r_z_pos}, $R^{-1}_z$ exists and $G$ can be retrieved as $G=UR_z^{-1}$.
This change of variables allows the feedback contributions from the neighbourhood of controllers as well as the  inverter reserve constraints to be affine in the decision
variables. Let us now define \(\mathcal M\subseteq\mathcal N\times\mathcal N\) to be
a prescribed communication mask imposing restrictions on $G$. Specifically, the constraint $G_{ij}=0,\ (i,j)\notin\mathcal M$,
means that inverter \(i\) is not allowed to use the virtual state \(z_j\).
 We consider the controller used in \eqref{virtual_controller}
 with the same choice of feedforward control in \eqref{eq:feedforward}.  The following preliminary result shows that the communication restrictions imposed on \(U\) are
inherited by \(G\).

\begin{lemma} \label{lem:scaled_sparse_representation}
Consider $U:=GR_z$, where $R_z:=\operatorname{diag}(r_{z,1},\ldots,r_{z,N})$. Under Assumption \ref{assum:r_z_pos},
if $U_{ij}=0, (i,j)\notin\mathcal M$,
then $G_{ij}=0, (i,j)\notin\mathcal M$. Moreover, if \(z\in Z(r_z):=\{z\in\mathbb R^N: |z_i|\le r_{z,i}\}\), then there exists \(\xi\in\mathbb R^N\) such that
$z=R_z\xi, \  |\xi_j|\le 1,\ Gz=U\xi$ . \hfill $\square$
\end{lemma}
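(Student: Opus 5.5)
The plan is a direct two-part verification. Both parts use only Assumption~\ref{assum:r_z_pos}, i.e., that $R_z$ is diagonal with strictly positive diagonal entries and is therefore invertible with $R_z^{-1}=\operatorname{diag}(1/r_{z,1},\ldots,1/r_{z,N})$.

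\emph{Sparsity inheritance.} Write $G=UR_z^{-1}$ entrywise. Because $R_z^{-1}$ is diagonal, right multiplication by it only rescales columns:
\[
G_{ij}=\sum_{k}U_{ik}(R_z^{-1})_{kj}=\frac{U_{ij}}{r_{z,j}}.
\]
If $(i,j)\notin\mathcal M$, then $U_{ij}=0$, and hence $G_{ij}=0$. The same identity, read in reverse as $U_{ij}=G_{ij}r_{z,j}$, shows the converse, so $U$ and $G$ have the same support.

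\emph{Representation of $z$.} Given $z\in Z(r_z)$, define $\xi:=R_z^{-1}z$, that is, $\xi_j=z_j/r_{z,j}$. By construction $z=R_z\xi$. The bound $|\xi_j|=|z_j|/r_{z,j}\le 1$ follows from $|z_j|\le r_{z,j}$ and $r_{z,j}>0$. Finally,
\[
Gz=GR_zR_z^{-1}z=U\xi.
\]

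No step here is a real obstacle. The only point needing care is strict positivity of $r_{z,j}$, which makes the division and the bound $|\xi_j|\le1$ well defined. This is exactly why Assumption~\ref{assum:r_z_pos} is imposed, since Lemma~\ref{lem:invariance} alone only gives $r_{z,j}\ge 0$. I would also note the consequence used later: $|(Gz)_i|\le\sum_j|U_{ij}|$ for every $z\in Z(r_z)$, which is affine in $U$.
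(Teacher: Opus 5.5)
Your proof is correct and follows the paper's argument: right multiplication by the diagonal $R_z^{-1}$ only rescales columns ($G_{ij}=U_{ij}/r_{z,j}$), so the zero pattern of $U$ carries over to $G$. Setting $\xi:=R_z^{-1}z$ then gives $|\xi_j|\le 1$ and $Gz=U\xi$.

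A minor correction to your closing aside: $\sum_j|U_{ij}|$ is convex piecewise-linear in $U$, not affine. It becomes linear only after introducing the auxiliary bounds $-V\le U\le V$, as in Proposition~\ref{prop:neighborhood_inverter_limits}.
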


\emph{Proof}: Since \(R_z^{-1}\) is diagonal, right multiplication by \(R_z^{-1}\)
only rescales the elements of \(U\). Hence the zero pattern of \(U\) is
preserved under the transformation \(G=UR_z^{-1}\). If \(|z_i|\le r_{z,i}\) and $\xi_i:=\frac{z_i}{r_{z,i}}$, then \(|\xi_i|\le 1\). Then, it holds that \(z=R_z\xi\) and $Gz=UR_z^{-1}R_z\xi=U\xi$. \hfill $\blacksquare$ 

We denote the scaled voltage sensitivity by $W:=HU$.
The matrix \(W\) describes the voltage sensitivity with respect to the
normalized virtual state \(\xi\), since \(z=R_z\xi\) and \(Gz=U\xi\).
To avoid the degenerate solution \(U=0\), and to ensure that each local
virtual droop state participates in the corresponding voltage feedback
channel, we impose a diagonal shaping condition. To this end, let
\(0<\underline\alpha_i\leq \bar\alpha_i\) be prescribed constants. The
condition is
\[
        -\bar\alpha_i r_{z,i}
        \leq W_{ii}
        \leq
        -\underline\alpha_i r_{z,i},  i\in\mathcal N .
\] where $\underline{\alpha}_i, \overline{\alpha}_i$ are positive scalars. Note that these constraints do not force the controller \(G\) to be diagonal.
They only impose nonzero negative diagonal elements in the shaped voltage
matrix \(W=HU\), with magnitude proportional to the invariant radius
\(r_{z,i}\). Off-diagonal entries of \(W\), and therefore nonlocal
coordination through \(G\), are still allowed whenever permitted by
the communication constraints.

\begin{lemma} \label{lem:neighborhood_voltage_envelope}
Suppose that Assumptions 1, \ref{assum:feeder-head-voltage}, and 6 hold. Consider the virtual controller in (\ref{virtual_controller}), with the affine feedforward law in (\ref{eq:feedforward}), and let \(U=GR_z\),  where $R_z:=\operatorname{diag}(r_{z,1},\ldots,r_{z,N})$. Suppose that \(z(t)\in Z(r_z)\) for all $t \geq 0$ and let \(A_{ij}\ge 0\) and \(\mu_i\ge 0\) satisfy 
\begin{equation} -A_{ij}\le W_{ij}\le A_{ij}, \qquad i,j\in\mathcal N, \label{eq:HU-envelope} \end{equation}
and 
\begin{equation} 
\left| \phi_i' + y_0' + \sum_{k\in\mathcal N} H_{ik} \left(q_{\phi,k}+y_0' q_{y,k}\right) \right| \le \mu_i, \label{eq:residual-envelope} 
\end{equation}
for all $\phi_i'\in\{-\bar\phi_i,\bar\phi_i\}, y_0'\in Y_0 .$ Then, for every admissible \(\phi_i(t)\in[-\bar\phi_i,\bar\phi_i]\) and every \(y_0(t)\in Y_0\), it holds that
 \begin{equation} |y_i(t)| \le \sum_{j\in\mathcal N} A_{ij}+\mu_i . \label{eq:voltage-envelope-raw} \nonumber 
\end{equation}
for all $t \geq 0$ and for all $i\in\mathcal N$. \hfill $\square$
\end{lemma}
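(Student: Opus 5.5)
The plan is to decompose the output into a feedback term and a residual term, and bound each separately. Substituting the controller \eqref{virtual_controller} with the feedforward law \eqref{eq:feedforward} into $y=Hq_g+\delta$ gives
\[
y(t)=HGz(t)+r(t),\qquad r(t)=H\bigl(q_\phi+y_0(t)q_y\bigr)+\phi(\rho(t),q_c(t))+y_0(t)\mathbf 1 .
\]
Componentwise this reads
\[
r_i(t)=\phi_i(t)+y_0(t)+\sum_{k\in\mathcal N}H_{ik}\bigl(q_{\phi,k}+y_0(t)q_{y,k}\bigr).
\]
By the triangle inequality, $|y_i(t)|\le |(HGz(t))_i|+|r_i(t)|$.

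For the feedback term, I will invoke Lemma \ref{lem:scaled_sparse_representation}. Assumption \ref{assum:r_z_pos} makes $R_z$ invertible, and $z(t)\in Z(r_z)$ holds by hypothesis. So there is $\xi(t)=R_z^{-1}z(t)$ with $|\xi_j(t)|\le 1$ and $Gz(t)=U\xi(t)$. Hence $HGz(t)=W\xi(t)$, and by \eqref{eq:HU-envelope}
\[
|(W\xi(t))_i|\le\sum_{j\in\mathcal N}|W_{ij}||\xi_j(t)|\le\sum_{j\in\mathcal N}A_{ij}.
\]

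For the residual term, I fix $i$ and regard $r_i$ as a function $g_i(\phi',y_0')$ of the pair $(\phi_i',y_0')$. This function is affine jointly in $(\phi',y_0')$: it is linear in $\phi'$ with coefficient one, and affine in $y_0'$ with slope $1+\sum_k H_{ik}q_{y,k}$. It has no bilinear term, so $|g_i|$ is convex on the rectangle $[-\bar\phi_i,\bar\phi_i]\times Y_0$. A convex function on a box attains its maximum at a vertex. Condition \eqref{eq:residual-envelope} bounds $|g_i|$ by $\mu_i$ at all four vertices, so $|g_i|\le\mu_i$ on the whole box.

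By Lemma \ref{lem:exogenous-bound-customer-i} (using Assumption \ref{assum:exogenous-bounds}), $\phi_i(t)$ lies in $[-\bar\phi_i,\bar\phi_i]$, and by Assumption \ref{assum:feeder-head-voltage}, $y_0(t)\in Y_0$. Therefore $|r_i(t)|\le\mu_i$ for all $t\ge0$. Adding the two bounds gives the claim.

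The only point needing care is the vertex argument. I must check that $\phi_i$ and $y_0$ enter independently, with no product term, so that the box vertices really dominate; this holds since $y_0$ multiplies only the fixed vector $q_y$.
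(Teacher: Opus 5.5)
Your proposal is correct and follows the paper's proof essentially step for step. You use the rescaling $Gz(t)=U\xi(t)$ with $|\xi_j(t)|\le 1$ from Lemma~\ref{lem:scaled_sparse_representation} to bound the feedback term by $\sum_{j}A_{ij}$. You bound the residual $r_i(t)$ by $\mu_i$ via the vertex argument for an affine function on the rectangle $[-\bar\phi_i,\bar\phi_i]\times Y_0$, and combine the two bounds with the triangle inequality, exactly as the paper does.
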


\emph{Proof:} By Lemma 4, since \(z(t)\in Z(r_z)\), there exists \(\xi(t)\) with \(|\xi_j(t)|\le 1\) such that \(Gz(t)=U\xi(t)\). Substituting the controller into the voltage equation gives \[ y(t) = W\xi(t) + H\left(q_\phi+y_0(t)q_y\right) + \phi(t) + y_0(t)\mathbf 1 . \] 
For each \(i\in\mathcal N\) it holds that  $y_i(t) = \sum_{j\in\mathcal N}W_{ij}\xi_j(t) + r_i(t)$. From \eqref{eq:HU-envelope} and \(|\xi_j(t)|\le 1\), we obtain \[ \left| \sum_{j\in\mathcal N}(HU)_{ij}\xi_j(t) \right| \le \sum_{j\in\mathcal N}A_{ij}. \] Moreover, \(r_i(t)\) is affine in \(\phi_i(t)\) and \(y_0(t)\). By Lemma 1 and Assumption \ref{assum:feeder-head-voltage}, the admissible set is $\phi_i(t)\in[-\bar\phi_i,\bar\phi_i], y_0(t)\in Y_0 . $ Since the maximum absolute value of an affine function over this rectangle is attained at a vertex, the vertex constraints \eqref{eq:residual-envelope} imply $|r_i(t)|\le \mu_i.$ Hence,  by application of the triangle inequality  it holds that:  \[ |y_i(t)| \le \left| \sum_{j\in\mathcal N}(HU)_{ij}\xi_j(t) \right| + |r_i(t)| \le \sum_{j\in\mathcal N}A_{ij}+\mu_i, \] which concludes the proof. \hfill $\blacksquare$

\subsection{Stability and safety}

The voltage envelope from Lemma~ \ref{lem:neighborhood_voltage_envelope} is useful only if the virtual state
remains inside the set \(Z(r_z)\). As such, deriving conditions to ensure that this is the case is of utmost importance. Furthermore, additional constraints are required to ensure that the physical reactive power input remains within the inverter
limits. Note that the slope restriction of \(K_i\) implies that whenever \(|y_i|\le b_i\),
one has \(|K_i(y_i)|\le d_i b_i\). Thus, choosing
\(r_{z,i}\ge d_i b_i\) ensures that the vector field points inward on the
boundary of \(Z(r_z)\). This intuitive observation is formalized in the following result.

\begin{lemma} 
\label{lem:neighborhood_forward_invariance} 
 Suppose that Assumptions 1, \ref{assum:feeder-head-voltage}, and 6 hold and consider the setting of Lemma \ref{lem:neighborhood_voltage_envelope}. Suppose that, for each \(i\in\mathcal N\), it holds for some chosen $b_i$ that
    $\sum_{j\in\mathcal N} A_{ij}+\mu_i\le b_i$,
and
    $r_{z,i}\ge |z_i(0)|$,
    $r_{z,i}\ge d_i b_i$,
    $r_{z,i}\ge \underline r_i>0$.
Then \(Z(r_z)\) is forward invariant. \hfill $\square$
\end{lemma}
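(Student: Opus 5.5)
The plan is a first-exit (bootstrapping) argument on a slightly enlarged box, combined with the variation-of-constants formula for the scalar virtual droop dynamics; letting the enlargement go to zero then gives invariance of \(Z(r_z)\) itself. The difficulty is that Lemma~\ref{lem:neighborhood_voltage_envelope} is circular as stated: it bounds \(y_i\) only under the hypothesis that \(z(t)\in Z(r_z)\), which is exactly what we want to prove. Its proof, however, is pointwise in time. At any instant \(t\) with \(|z_j(t)|\le c\,r_{z,j}\) for all \(j\), we can write \(Gz(t)=U\xi(t)\) with \(|\xi_j(t)|\le c\), exactly as in Lemma~\ref{lem:scaled_sparse_representation}. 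Then \eqref{eq:HU-envelope}, the vertex bound \eqref{eq:residual-envelope}, Lemma~1 and Assumption~\ref{assum:feeder-head-voltage} give
\[
|y_i(t)|\le c\sum_{j\in\mathcal N}A_{ij}+\mu_i\le c\,b_i \qquad \text{for } c\ge 1 .
\]
I will first record this scaled, pointwise version of the envelope.

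Next I bound the droop input. By Assumption~\ref{assum:sector-bounded}-1 and \(K_i(0)=0\), we have \(|K_i(y_i)|\le d_i|y_i|\). Hence at any such instant \(|K_i(y_i(t))|\le c\,d_ib_i\le c\,r_{z,i}\), using the hypothesis \(r_{z,i}\ge d_ib_i\). The \(i\)-th virtual state satisfies the linear scalar equation \(\dot z_i=-\tau_i^{-1}z_i+\tau_i^{-1}K_i(y_i(t))\), so
\[
z_i(t)=e^{-t/\tau_i}z_i(0)+\int_0^t \tau_i^{-1}e^{-(t-s)/\tau_i}K_i(y_i(s))\,ds .
\]

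Now fix \(\varepsilon>0\) and set \(c=1+\varepsilon\). Define \(t^\ast:=\sup\{t\ge0:\ |z_j(s)|\le (1+\varepsilon)r_{z,j}\ \forall s\in[0,t],\ j\in\mathcal N\}\). Since \(|z_j(0)|\le r_{z,j}<(1+\varepsilon)r_{z,j}\), where strictness uses \(r_{z,j}\ge\underline r_j>0\), continuity of solutions gives \(t^\ast>0\). On \([0,t^\ast)\) the bound above applies with \(c=1+\varepsilon\), so
\[
|z_i(t)|\le e^{-t/\tau_i}r_{z,i}+(1-e^{-t/\tau_i})(1+\varepsilon)r_{z,i}<(1+\varepsilon)r_{z,i}.
\]
If \(t^\ast\) were finite, this strict inequality would extend to \(t^\ast\) by continuity, contradicting maximality. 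Therefore \(t^\ast=\infty\).

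The same convex-combination estimate also yields \(|z_i(t)|\le r_{z,i}+\varepsilon(1-e^{-t/\tau_i})r_{z,i}\) for all \(t\ge0\). Letting \(\varepsilon\to0\) gives \(|z_i(t)|\le r_{z,i}\), i.e.\ \(Z(r_z)\) is forward invariant; the same argument applies from any initial state in \(Z(r_z)\).

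The main obstacle I expect is precisely the non-strict boundary condition. On \(\partial Z(r_z)\) the vector field is only guaranteed to satisfy \(\tfrac{d}{dt}|z_i|\le0\), not \(<0\), so a naive first-exit argument fails. The \(\varepsilon\)-enlargement together with the explicit integral formula avoids any appeal to Nagumo-type tangent-cone theorems, and it requires only that solutions exist and are continuous. This holds because \(K\) is Lipschitz and the disturbances \(\phi,y_0\) are bounded and measurable.
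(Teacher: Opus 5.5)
Your argument is correct. Its first two steps match the paper's proof: the pointwise use of Lemma~\ref{lem:neighborhood_voltage_envelope} to get $|y_i|\le\sum_j A_{ij}+\mu_i\le b_i$ on the box, and the slope bound $|K_i(y_i)|\le d_ib_i\le r_{z,i}$.

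You diverge from the paper in how forward invariance is concluded. The paper checks only the sign of $\dot z_i$ on the faces $z_i=\pm r_{z,i}$ ($\dot z_i\le 0$, respectively $\ge 0$) and then declares the box invariant. That step implicitly appeals to a Nagumo-type sub-tangentiality theorem. Because the boundary inequalities are only non-strict and the residual $r(t)$ is time-varying and possibly only measurable, it also relies on uniqueness and on a Carath\'eodory version of that theorem. These are available here because $z\mapsto -T^{-1}z+T^{-1}K(HGz+r(t))$ is globally Lipschitz, but the paper does not say so.

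You instead run a first-exit argument on the inflated box $(1+\varepsilon)Z(r_z)$. You use the scaled envelope $|y_i|\le c\sum_jA_{ij}+\mu_i\le c\,b_i$, which holds since $\mu_i\ge 0$ and $c\ge 1$. The variation-of-constants formula then gives a \emph{strict} inequality, via $e^{-t/\tau_i}>0$ and $r_{z,i}\ge\underline r_i>0$, and you let $\varepsilon\to 0$.

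What your route buys:
\begin{itemize}
\item The proof is self-contained and needs only existence and continuity of solutions.
\item It makes the hypothesis $\underline r_i>0$ do real work; in the paper it enters only through $\xi=R_z^{-1}z$.
\item It explicitly resolves the apparent circularity of invoking Lemma~\ref{lem:neighborhood_voltage_envelope}, which is stated under the very hypothesis $z(t)\in Z(r_z)$ being proved.
\item Once invariance holds, your integral formula also gives the transient estimate $|z_i(t)|\le e^{-t/\tau_i}|z_i(0)|+(1-e^{-t/\tau_i})d_ib_i$.
\end{itemize}
The paper's version is shorter and more geometric, but it is less explicit on exactly the non-strict boundary issue you identified.
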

\emph{Proof:}   By Lemma~5, for every \(z \in Z(r_z)\), it holds that $|y_i(t)| \le
\sum_{j \in \mathcal{N}} A_{ij}
+
\mu_i$.
Under Assumption \ref{assum:sector-bounded}, we have
\begin{align}
|K_i(y_i)|
&=
|K_i(y_i)-K_i(0)|
\le d_i |y_i| \nonumber \\
& \le d_i
\left(
\sum_{j \in \mathcal{N}} A_{ij}
+
\mu_i
\right)
\le r_{z,i}. \nonumber 
\end{align}
Hence, on the boundary \(z_i=r_{z,i}\), it holds that $\dot z_i
=
-\tau_i^{-1} r_{z,i}
+
\tau_i^{-1} K_i(y_i)
\le 0$,
while on the boundary \(z_i=-r_{z,i}\), one has
$\dot z_i
=
\tau_i^{-1} r_{z,i}
+
\tau_i^{-1} K_i(y_i)
\ge 0$.
Thus, the vector field points inward on every boundary face of \(Z(r_z)\).
Since \(r_{z,i}\ge |z_i(0)|\), we have that \(z(0)\in Z(r_z)\). Therefore,
\(Z(r_z)\) is forward invariant.  \hfill $\blacksquare$ \\
The next
result provides linear reserve constraints that guarantee this physical
quantity remains within \(Q_i^{\max}\).
\begin{proposition} 
    \label{prop:neighborhood_inverter_limits}
Suppose that there exist \(V_{ij}\ge0\), with \(i,j\in\mathcal N\), such that the following inequalities hold:
\begin{align}
    -V_{ij}\le U_{ij}\le V_{ij},\qquad V_{ij}\ge0, 
\label{eq:U_bound_constraints}
\end{align}
and, for all \(y_0'\in Y_0\), it holds that:
\begin{align}
 q_{\varphi,i}+y_0' q_{y,i}
+
\sum_{j=1}^N V_{ij}
\le Q_i^{\max}, \forall i \in \mathcal{N} \label{eq:reserve_constraints1}
\end{align}
\begin{align}
-(q_{\varphi,i}+y_0' q_{y,i})
+
\sum_{j=1}^N V_{ij}
\le Q_i^{\max},  \label{eq:reserve_constraints2} \forall i \in \mathcal{N}.
\end{align}
Then, for every trajectory satisfying \(z(t)\in Z(r_z)\), it holds that $|q_{g,i}(t)|\le Q_i^{\max}, \forall t\ge 0$. \hfill $\square$

\end{proposition}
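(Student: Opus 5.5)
We bound $|q_{g,i}(t)|$ directly from the controller structure in \eqref{virtual_controller} with the feedforward law \eqref{eq:feedforward}. The argument mirrors part iii) of Theorem~\ref{thm:opt}, but is written in terms of the scaled variable $U$ rather than $G$ and $r_z$.

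First, fix $t\ge0$ and a trajectory with $z(t)\in Z(r_z)$. By Lemma~\ref{lem:scaled_sparse_representation}, which relies on Assumption~\ref{assum:r_z_pos}, there exists $\xi(t)$ with $|\xi_j(t)|\le 1$ and $Gz(t)=U\xi(t)$. Hence
\[
q_{g,i}(t)=\sum_{j=1}^N U_{ij}\xi_j(t)+q_{\varphi,i}+y_0(t)q_{y,i}.
\]
Using \eqref{eq:U_bound_constraints} we have $|U_{ij}|\le V_{ij}$, so the triangle inequality gives
\[
|q_{g,i}(t)|\le \sum_{j=1}^N V_{ij}+|q_{\varphi,i}+y_0(t)q_{y,i}|.
\]
This step removes the bilinearity: the state contribution is bounded by the linear quantity $\sum_j V_{ij}$, with no dependence on $r_z$.

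Next, we handle the feedforward term. The constraints \eqref{eq:reserve_constraints1}--\eqref{eq:reserve_constraints2} together state that $|q_{\varphi,i}+y_0' q_{y,i}|+\sum_j V_{ij}\le Q_i^{\max}$ at each vertex $y_0'\in\{\underline y_0,\overline y_0\}$. The map $y_0\mapsto |q_{\varphi,i}+y_0q_{y,i}|$ is convex, so on the interval $\mathcal Y_0$ its maximum is attained at an endpoint. By Assumption~\ref{assum:feeder-head-voltage}, $y_0(t)\in\mathcal Y_0$, and therefore the vertex inequalities extend to $y_0(t)$. Combining this with the previous bound yields $|q_{g,i}(t)|\le Q_i^{\max}$ for all $t\ge0$ and all $i$.

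The main point needing care is the first step. One must check that the representation $Gz=U\xi$ with $|\xi|\le1$ holds pointwise in time along the trajectory; Lemma~\ref{lem:scaled_sparse_representation} guarantees this whenever $z(t)\in Z(r_z)$ and $r_{z,i}>0$. The convexity argument over $\mathcal Y_0$ is routine.
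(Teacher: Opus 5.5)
Your proof is correct and matches the paper's argument essentially step for step. The paper also uses Lemma~\ref{lem:scaled_sparse_representation} to write $Gz(t)=U\xi(t)$ with $|\xi_j|\le 1$ and bounds $|(Gz(t))_i|\le\sum_j V_{ij}$; it then extends the vertex reserve constraints to every $y_0(t)\in\mathcal Y_0$ using the affinity of the feedforward term in $y_0$, and closes with the triangle inequality (your explicit convexity remark just spells out that extension step, which the paper only asserts).
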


\emph{Proof:}  Let \(z(t)\in Z(r_z)\). By Lemma~4, there exists
\(\xi\in\mathbb{R}^N\) such that \(|\xi_j|\le 1\) and $Gz(t)=U\xi$ . Thus, for each \(i\in\mathcal{N}\) it holds that 
\begin{align}
|(Gz(t))_i|
=
\left|\sum_{j=1}^N U_{ij}\xi_j\right|
\le
\sum_{j=1}^N |U_{ij}|
\le
\sum_{j=1}^N V_{ij}. \nonumber 
\end{align}
Moreover, the feedforward term $\tilde q_i(t)=q_{\phi,i}+y_0(t)q_{y,i}$ is affine in \(y_0(t)\), and
\(y_0(t)\in[\underline y_0,\overline y_0]\), which implies that:
\[
|\tilde q_i(t)|+\sum_{j=1}^N V_{ij}
\le Q_i^{\max},
\qquad \forall t\ge 0.
\]
which is a sufficient condition for the derivation
\[
|q_{g,i}(t)|
=
|(Gz(t))_i+\tilde q_i(t)|
\le
|(Gz(t))_i|+|\tilde q_i(t)|
\le Q_i^{\max}, 
\]
which concludes the proof. \hfill $\blacksquare$

\subsection{Controller Synthesis}
In Section~4,
the spectral radius condition in Assumption~\ref{assum:spectral_radius} was used to establish the
existence of an invariant box for a fixed coordination matrix. In the
synthesis problem below, this condition is not imposed directly. Instead,
we enforce forward invariance through the linear sufficient condition
\(r_{z,i}\geq d_i b_i\), together with the voltage envelope constraint
\(|y_i(t)|\leq b_i\). This guarantees that
\(|K_i(y_i(t))|\leq r_{z,i}\), and hence that the vector field points
inward on the boundary of the chosen box.
Based on the results of the previous section, we  propose the following optimization problem ($\mathrm{\tilde{P}}$):

\begin{equation}
\label{eq:pcvc-lp}
\left\{
\begin{aligned}
&
\min_{\substack{U,A,V,\mu,r_z,b,\\ q_\phi,q_y}}
\quad
\|b\|_1
+\varepsilon_r\|r_z\|_1
+\varepsilon_A\|A\|_{1,1}
+\varepsilon_V\|V\|_{1,1}
\\[-0.5mm]
&
\mathrm{s.t.} \ \ \  \quad U_{ij}=0, (i,j)\notin\mathcal M,
\\
&
\qquad -A\leq W\leq A, \nonumber \\
&\qquad -V\leq U\leq V,
\\
&
\qquad -\mu+\bar\phi
\leq
y_0'\mathbf 1+H(q_\phi+y_0'q_y)
\leq
\mu-\bar\phi,
y_0'\in\mathcal Y_0^{\mathrm v},
\\
&
\ \ \ \ \qquad A\mathbf 1+\mu\leq b,
\\
&
\qquad \ \ \  \ r_z\geq |z(0)|, \  r_z\geq Db, \ r_z\geq\underline r,
\\
&
\qquad -Q^{\max}+V\mathbf 1
\leq
q_\phi+y_0'q_y
\leq
Q^{\max}-V\mathbf 1,
y_0'\in\mathcal Y_0^{\mathrm v},
\\
&
\qquad -\overline\alpha\odot r_z
\leq
\operatorname{diag}(W)
\leq
-\underline\alpha\odot r_z.
\end{aligned}
\right.
\end{equation}
where  $W:=HU$,
$D:=\operatorname{diag}(d_1,\ldots,d_N)$, $\bar\phi
:=(\bar\phi_1,\ldots,\bar\phi_N)^\top$,
$Q^{\max}
:=(Q_1^{\max},\ldots,Q_N^{\max})^\top$,
$\mathcal Y_0^{\mathrm v}:=\{\underline y_0,\overline y_0\}$,
$\underline\alpha
:=
(\underline\alpha_1,\ldots,\underline\alpha_N)^\top$, and
$\overline\alpha
:=
(\overline\alpha_1,\ldots,\overline\alpha_N)^\top$.
All vector and matrix inequalities below are understood componentwise,
and \(\odot\) denotes componentwise multiplication.  Here, $\underline{r}\in\mathbb{R}_{>0}^{N}$ and
$\epsilon_r,\epsilon_A,\epsilon_V\geq 0$ are prescribed constants,
$A,V\in\mathbb{R}_{\geq 0}^{N\times N}$,
$\mu,b,r_z\in\mathbb{R}_{\geq 0}^{N}$, and
$\|A\|_{1,1}:=\sum_{i,j}|A_{ij}|$, with $\|V\|_{1,1}$
defined analogously.

The primary objective in \((\widetilde{\mathrm{P}})\) is to minimize the voltage certificate
\(b\). The remaining terms are regularizers, where 
\(\|r_z\|_1\) discourages large invariant boxes,
\(\|A\|_{1,1}\) discourages large voltage sensitivities, and
\(\|V\|_{1,1}\) discourages excessive use of inverter reserve. Thus, every
feasible solution of \((\widetilde{\mathrm{P}})\) defines a safe controller, while an optimal
solution selects, among the feasible safe controllers, one that minimizes
the chosen certificate objective. 
Owing to its separable objective and linear coupling constraints,
the synthesis problem is amenable to distributed optimization methods.
\begin{theorem}
\label{thm:neighborhood_sparse_synthesis}
For any feasible solution of \((\widetilde{\mathrm{P}})\), define
\(R_z^\ast:=\operatorname{diag}(r_{z,1}^\ast,\ldots,r_{z,N}^\ast)\)
and \(G^\ast:=U^\ast(R_z^\ast)^{-1}\). Then the controller $ q_g(t)=G^\ast z(t)+q_\phi^\ast+y_0(t)q_y^\ast$
satisfies, for all \(t\ge 0\) and all \(i\in\mathcal N\), the conditions $|z_i(t)|\le r^\ast_{z,i}$, $|q_{g,i}(t)|\le Q_i^{\max}$,
and $|y_i(t)|\le b^\ast_i$. \hfill $\square$
\end{theorem}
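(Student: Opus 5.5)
The proof assembles Lemmas~\ref{lem:scaled_sparse_representation}--\ref{lem:neighborhood_forward_invariance} and Proposition~\ref{prop:neighborhood_inverter_limits}, checking that every feasible point of $(\widetilde{\mathrm P})$ meets their hypotheses. Take any feasible $(U^\ast,A^\ast,V^\ast,\mu^\ast,r_z^\ast,b^\ast,q_\phi^\ast,q_y^\ast)$.

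\emph{Step 1: the controller is well defined and respects the mask.} The constraint $r_z^\ast\ge\underline r>0$ gives Assumption~\ref{assum:r_z_pos}. Hence $R_z^\ast$ is invertible and $G^\ast=U^\ast(R_z^\ast)^{-1}$ is well defined. By Lemma~\ref{lem:scaled_sparse_representation}, $G^\ast$ inherits the mask $\mathcal M$, and $G^\ast z=U^\ast\xi$ whenever $z\in Z(r_z^\ast)$.

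\emph{Step 2: translate the LP constraints into the lemma hypotheses.} The constraint $-A\le W\le A$ is exactly \eqref{eq:HU-envelope}. The residual constraint $-\mu+\bar\phi\le y_0'\mathbf 1+H(q_\phi+y_0'q_y)\le \mu-\bar\phi$ must be rewritten componentwise. Writing $c_i:=y_0'+\sum_k H_{ik}(q_{\phi,k}+y_0'q_{y,k})$, it states $|c_i|+\bar\phi_i\le\mu_i$. This implies $|\phi_i'+c_i|\le\mu_i$ for both $\phi_i'=\pm\bar\phi_i$, which is \eqref{eq:residual-envelope} at each vertex $y_0'\in\mathcal Y_0^{\mathrm v}$. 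Similarly, $-Q^{\max}+V\mathbf 1\le q_\phi+y_0'q_y\le Q^{\max}-V\mathbf 1$ is equivalent to \eqref{eq:reserve_constraints1}--\eqref{eq:reserve_constraints2}, and $-V\le U\le V$ is \eqref{eq:U_bound_constraints}. Finally, $A\mathbf 1+\mu\le b$ together with $r_z\ge|z(0)|$, $r_z\ge Db$ and $r_z\ge\underline r$ gives the hypotheses of Lemma~\ref{lem:neighborhood_forward_invariance}.

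\emph{Step 3: derive the three conclusions.} Lemma~\ref{lem:neighborhood_forward_invariance} yields forward invariance of $Z(r_z^\ast)$, so $|z_i(t)|\le r^\ast_{z,i}$ for all $t\ge0$. With $z(t)\in Z(r_z^\ast)$ established, Lemma~\ref{lem:neighborhood_voltage_envelope} gives $|y_i(t)|\le\sum_jA^\ast_{ij}+\mu^\ast_i\le b^\ast_i$. Proposition~\ref{prop:neighborhood_inverter_limits} gives $|q_{g,i}(t)|\le Q_i^{\max}$. The diagonal shaping constraint is not needed for any of these conclusions; it only excludes the trivial solution $U=0$.

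\emph{Main obstacle: circularity.} Lemma~\ref{lem:neighborhood_voltage_envelope} assumes $z(t)\in Z(r_z)$ for all $t$, while Lemma~\ref{lem:neighborhood_forward_invariance} uses the voltage bound to obtain invariance. I would resolve this pointwise in state rather than in time. Fix any $z\in Z(r_z^\ast)$ and any admissible disturbances, and apply the static estimate from the proof of Lemma~\ref{lem:neighborhood_voltage_envelope} to get $|y_i|\le b^\ast_i$. Slope restriction then gives $|K_i(y_i)|\le d_ib_i^\ast\le r^\ast_{z,i}$, so the vector field points inward on every face of the box. A standard Nagumo-type argument (or a first-exit-time contradiction for the continuous time-varying system) then gives invariance. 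One should note that the closed loop is well posed: $y$ is an explicit Lipschitz function of $z$, so solutions exist and are unique.
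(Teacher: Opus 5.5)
Your proposal is correct and follows essentially the same route as the paper. It establishes invertibility of $R_z^\ast$ and inheritance of the mask via Lemma~\ref{lem:scaled_sparse_representation}, forward invariance of $Z(r_z^\ast)$ via Lemma~\ref{lem:neighborhood_forward_invariance}, the voltage envelope via Lemma~\ref{lem:neighborhood_voltage_envelope}, and the reserve bound via Proposition~\ref{prop:neighborhood_inverter_limits}. Your componentwise translation of the LP residual constraint into the vertex conditions \eqref{eq:residual-envelope}, and your pointwise-in-state resolution of the apparent circularity between the two lemmas, only make explicit steps the paper leaves implicit (its proof of Lemma~\ref{lem:neighborhood_forward_invariance} likewise applies the envelope for every $z\in Z(r_z)$).
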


\emph{Proof:}
Since \(r_{z,i}^\ast\ge \underline r_i>0\), the matrix \(R_z^\ast\)
is invertible. The equivalence in sparsity between $U^\ast$ and \(G^\ast\) follows from
Lemma~\ref{lem:scaled_sparse_representation}. 
Lemma~\ref{lem:neighborhood_voltage_envelope} then implies that if
\(z(t)\in Z(r_z^\ast)\) then it holds that:
\[
    |y_i(t)|
    \le
    \sum_{j\in\mathcal N}A_{ij}^\ast+\mu_i^\ast
    \le b_i^\ast .
\]
From Lemma~\ref{lem:neighborhood_forward_invariance}, sufficient conditions for the condition \(z(t)\in Z(r_z^\ast)\)  to hold are  
$r_{z,i}^\ast\ge |z_i(0)|$,
    $r_{z,i}^\ast\ge d_i b_i^\ast$, and 
    $r_{z,i}^\ast\ge \underline r_i>0$.
 As such, it holds that 
    $|z_i(t)|\le r_{z,i}^\ast, t\ge0,\ i\in\mathcal N $.
Since \(z(t)\in Z(r_z^\ast)\) for all \(t\ge0\), the voltage bound above
also holds for all \(t\ge0\). Finally, constraints
\eqref{eq:U_bound_constraints}, \eqref{eq:reserve_constraints1}, and
\eqref{eq:reserve_constraints2} imply, by
Proposition~\ref{prop:neighborhood_inverter_limits}, that $|q_{g,i}(t)|\le Q_i^{\max}$,
    $t\ge0,\ i\in\mathcal N$ .
This concludes the proof.  \hfill $\blacksquare$ 

Theorem~\ref{thm:neighborhood_sparse_synthesis} states that any feasible solution of ($\tilde{\mathrm P}$) yields an implementable coordinated controller that respects the prescribed communication structure and inverter limits while certifying all bus voltages under admissible disturbances. Thus, the linear program does not merely assess a given controller, but jointly determines the controller and its all-time certificates.
\subsection{Sparse inverter topology}
The developments above assume that every monitored bus is equipped with a
controllable inverter. 
In this section, we show how our setting can be easily extended to remove this assumption. Let
\(\mathcal N=\{1,\ldots,N\}\) denote the set of monitored buses and let
\(\mathcal I=\{\iota_1,\ldots,\iota_m\}\subset \mathcal N\) denote the subset of
buses equipped with controllable inverters, with \(m\leq N\). Let
\(S_{\mathcal I}\in\{0,1\}^{m\times N}\) be the selection matrix that extracts
the components associated with inverter buses, so that
\(x_{\mathcal I}=S_{\mathcal I}x\) for any \(x\in\mathbb R^N\). Conversely, a
vector of controllable reactive power injections
\(q_{\mathcal I}\in\mathbb R^m\) can be embedded into the full graph by  $q_g = S_{\mathcal I}^{\top} q_{\mathcal I}$ such that  \(q_{g,i}=0\) for all \(i\notin\mathcal I\), so buses in $\mathcal{N} \setminus \mathcal{I}$ have no controllable reactive power injection. Substituting this expression into the voltage model gives
\[
        y_{\mathcal N}
        = H S_{\mathcal I}^{\top} q_{\mathcal I}
          + \phi_{\mathcal N} + y_0 1_N
        = H_{\mathcal N\mathcal I}q_{\mathcal I}
          + \phi_{\mathcal N} + y_0 1_N,
\]
where $H_{\mathcal N\mathcal I}:=H S_{\mathcal I}^{\top}
        \in\mathbb R^{N\times m}.$ The droop feedback is evaluated only at buses that have controllable reactive
power. Hence, 
\begin{align}
        y_{\mathcal I}
        = S_{\mathcal I} y_{\mathcal N}
        &= S_{\mathcal I}H S_{\mathcal I}^{\top}q_{\mathcal I}
          + S_{\mathcal I}\phi_{\mathcal N} + y_0 1_m \nonumber \\
        &= H_{\mathcal I\mathcal I}q_{\mathcal I}
          + \phi_{\mathcal I} + y_0 1_m, \nonumber 
\end{align}
with $H_{\mathcal I\mathcal I}:=
        S_{\mathcal I}H S_{\mathcal I}^{\top}\in\mathbb R^{m\times m}$, and 
        $\phi_{\mathcal I}:=S_{\mathcal I}\phi_{\mathcal N}.$
 The corresponding inverter dynamics are 
 \begin{align}
 \dot q_{\mathcal I}
        =
        -T_{\mathcal I}^{-1}q_{\mathcal I}
        +T_{\mathcal I}^{-1}K_{\mathcal I}(y_{\mathcal I}), \nonumber  
      \end{align}  
        where  $T_{\mathcal I}:=\mathrm{diag}(\tau_{\iota_1},\ldots,\tau_{\iota_m})$, and 
        $K_{\mathcal I}(y_{\mathcal I})
        :=
        (K_{\iota_1}(y_{\iota_1}),\ldots,K_{\iota_m}(y_{\iota_m}))^\top$ .
The sparse virtual controller is therefore written as
\begin{equation}
\left\{
\begin{aligned}
q_{\mathcal I}(t)
&=
G_{\mathcal I}z_{\mathcal I}(t)
+q_\phi+y_0(t)q_y,\\
\dot z_{\mathcal I}(t)
&=
-T_{\mathcal I}^{-1}z_{\mathcal I}(t)
+T_{\mathcal I}^{-1}K_{\mathcal I}(y_{\mathcal I}(t)).
\end{aligned}
\right.
\end{equation}
where \(G_{\mathcal I}\in\mathbb R^{m\times m}\) and
\(q_\phi,q_y\in\mathbb R^m\).  Defining
\(R_z=\mathrm{diag}(r_{z,1},\ldots,r_{z,m})\) and
\(U=G_{\mathcal I}R_z\). Any prescribed communication mask is now imposed on \(G_{\mathcal I}\), or equivalently on \(U\), over the index set $\mathcal{I}$. 

We note that in the sparse setting,
\(z_{\mathcal I},q_{\mathcal I},q_\phi,q_y,r_z\in\mathbb R^m\) and
\(G_{\mathcal I},U,V,R_z\in\mathbb R^{m\times m}\), while
\(y_{\mathcal N},b_{\mathcal N},\mu\in\mathbb R^N\) and
\(A\in\mathbb R_{\geq0}^{N\times m}\).
The output equation then takes the form: 
\begin{equation}
        y_{\mathcal N}(t)
        =
        H_{\mathcal N\mathcal I}U\xi(t)
        +H_{\mathcal N\mathcal I}
          \big(q_\phi+y_0(t)q_y\big)
        +\phi_{\mathcal N}(t)+y_0(t)1_N, \nonumber 
\end{equation}

where \(|\xi_j(t)|\leq 1\) whenever
\(|z_{\mathcal I,j}(t)|\leq r_{z,j}\). Thus, the case with sparse inverters has the
same structure as the fully actuated case, but with two distinct sensitivity
matrices;  \(H_{\mathcal N\mathcal I}\) is used to certify all
voltages, while \(H_{\mathcal I\mathcal I}\) is used for the feedback
constraints at inverter buses. The synthesis problem is thus obtained from \((\tilde P)\) by replacing the full
actuation matrix \(H\) with \(H_{\mathcal N\mathcal I}\) in the voltage
certificate constraints, and with \(H_{\mathcal I\mathcal I}\) in the
inverter feedback constraints. Buses without inverters are therefore still certified, even
though they are not directly actuated. The main limitation relative to the
fully actuated case is that \(H_{\mathcal N\mathcal I}\) is generally
rectangular, and hence the exact disturbance cancellation argument based on
\(H^{-1}\) is no longer directly available.

\section{Numerical Studies}

\subsection{5-bus European residential feeder with full inverter placement}
\begin{figure}[h]
    \centering
    \includegraphics[width=0.45\textwidth]{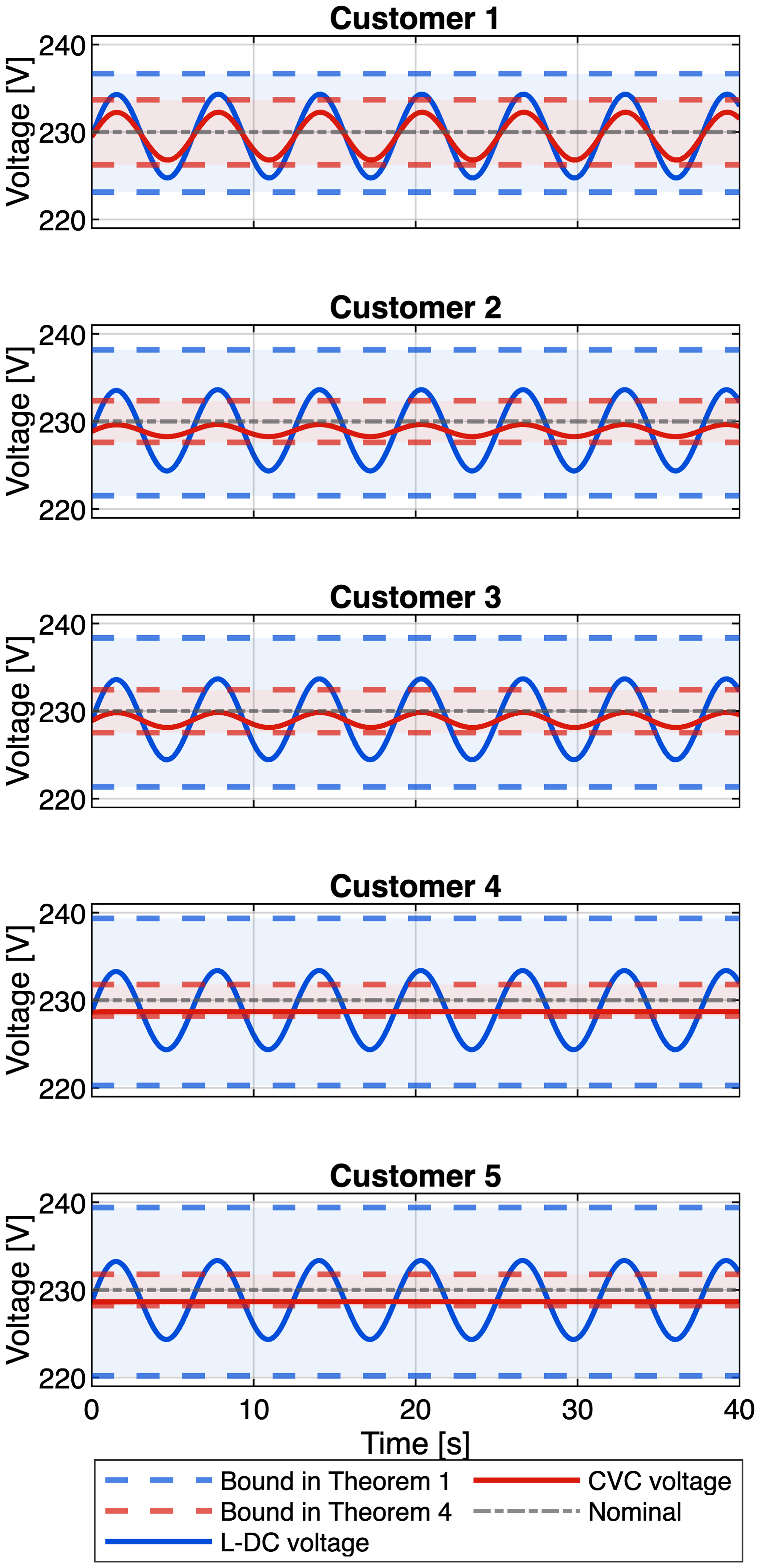}
    \caption{Individual customer voltages and deterministic certificates
  for the five customer feeder. Blue dashed lines denote the heterogeneous
  certificate of Theorem~1, and red dashed lines denote the certificate
  of Theorem~4. Blue solid curves are generated by standard Local Droop Control (L-DC)
  and red solid curves by the Coordinated Virtual Controller (CVC) of Section~5.}
    \label{fig:five_customer_voltages}
\end{figure}

We first consider a benchmark 5-bus residential European low voltage feeder by the CIGRE taskforce \cite{force2014benchmark}. Let the square deviation of the substation voltage $\hat{v}_0$  from the nominal $\bar{v}$ be given by $y_0(t)=230^2-(230+5\sin(t))^2$. As such, $y_0$ takes values in the interval $[-2325, 2275]$, i.e., $|y_0(t)| \leq 2325$.  
The customer generation, consumption and inverter parameters are
summarised in Table~\ref{tab:five_customer_power_data}. The available reactive power reserve is obtained by $Q^{\mathrm{max}}_i=\sqrt{\bar s_i^2-\rho_{g,i}^2}$. The line and customer connection impedances are reported in
Table~\ref{tab:five_customer_parameters}.
\begin{table}[h]
\centering
\caption{Power for the 5-customer feeder in \cite{chong2019local}}
\label{tab:five_customer_power_data}
\begin{tabular}{c|ccccc}
\hline
Customer $i$ & 1 & 2 & 3 & 4 & 5 \\
\hline
$\rho_{g,i}$ [W]
& 3500 & 5500 & 4000 & 4500 & 3000 \\

$\rho_{c,i}$ [W]
& 2295 & 5440 & 5440 & 2295 & 2720 \\

$q_{c,i}$ [VAr]
& 300 & 960 & 480 & 600 & 400 \\

$\bar s_i$ [VA]
& 4200 & 6500 & 4700 & 5300 & 3600 \\

$Q^{\mathrm{max}}_i$ [VAr]
& 2321.6 & 3464.1 & 2467.8 & 2800.0 & 1990.0 \\
\hline
\end{tabular}
\end{table}

\begin{table}[h]
\centering
\caption{Line impedances for the 5- customer feeder in \cite{chong2019local}}
\label{tab:five_customer_parameters}
\begin{tabular}{c c c c c}
\hline
Customer $i$ & $R_i$ & $X_i$ & $R_i'$ & $X_i'$ \\
\hline
1  & 0.00343 & 0.04711 & 0.00147 & 0.02157 \\
2  & 0.00172 & 0.02356 & 0.00662 & 0.09707 \\
3  & 0.00343 & 0.04711 & 0.00147 & 0.02157 \\
4  & 0.00515 & 0.07067 & 0.00147 & 0.02157 \\
5  & 0.00172 & 0.02356 & 0.00147 & 0.02157 \\
\hline
\end{tabular}
\end{table}

 The inverter time constants
are $\tau_i=1\,\mathrm{s}$ and all controller states are initialised at
zero. In accordance with the numerical study in \cite{chong2019local}, the droop design uses the maximum slope
$d=0.2325$ and the squared voltage breakpoint
$y_{\max}=14899.4\,\mathrm{V}^2$. In this section, we compare three different certificates: The homogeneous certificates (Corollary 1 and Theorem 1 in \cite{chong2019local}), the heterogeneous certificates of Theorem \ref{thm:heterogeneous-certificate-agent} and the certificates corresponding to the coordinated virtual controller (CVC). For the latter, we set $\alpha=0.25$.   Figure~\ref{fig:five_customer_voltages} compares the three voltage
certificates with the simulated closed-loop voltages of the two
controllers. Each panel corresponds to one of the five customers. The blue dashed lines denote the
heterogeneous certificate of Theorem~\ref{thm:heterogeneous-certificate-agent}. The heterogeneous certificates apply to 
the local droop controller (L-DC). Consequently,  the heterogeneous certificates reduce
conservatism by retaining the bus dependent disturbance
bounds, inverter parameters and voltage sensitivities. For this case study, the homogeneous certificate of Corollary 1 and Theorem 1 in \cite{chong2019local} corresponds to the values 207V and 253V. The blue solid line in each subplot denotes the simulated voltage under the L-DC. The red dashed line denotes the
certificate obtained from Theorem \ref{thm:neighborhood_sparse_synthesis} of Section~5, and the
red solid curve is the simulated voltage under the corresponding
coordinated virtual controller (CVC). Hence, the comparison between the blue
and red results reflects not only a change of certificate but also a change of controller. All five customer voltages are displayed individually to show at which
customers the coordinated virtual controller provides the largest
improvement.

Note that the coordinated
virtual controller (CVC) substantially improves the simulated voltage control for
all customers. The maximum deviations per customer from the nominal
voltage under local droop control are
$[5.259,5.648,5.556, 5.640 ,5.660]^{\top}\mathrm{V}$,
whereas, under the coordinated controller, they are reduced to
$[3.207,1.733,1.878,1.354,1.386]^{\top}\mathrm{V}$.
Note that at Customers~4 and~5, the peak-to-peak voltage variations
decrease from $9.028\,\mathrm{V}$ and $9.012\,\mathrm{V}$ under local
droop to only $0.061\,\mathrm{V}$ and $0.045\,\mathrm{V}$ under the
CVC. Hence, the coordinated controller almost completely rejects
the periodic disturbance at the downstream customers, although a small
steady voltage offset remains. This can be achieved as long as the reactive power capacity allows it and is a result of the optimization problem $(\mathrm{\tilde{P}})$. Finally, all L-DC trajectories remain within the heterogeneous
certificates of Theorem~1, while all CVC trajectories remain
within the corresponding certificate of Theorem~4.

\subsection{26-bus rural feeder with sparse inverter placement}
\label{subsec:simbench_case}

The second study considers an extended rural LV network,
constructed from the SimBench feeder
\texttt{1-LV-rural1--0-no\_sw} \cite{meinecke2020simbench}. The SimBench data are used to obtain
the line parameters, customer data, original photovoltaic placement
and time-series profiles. To create a more demanding radial grid test,
we double the depth of each of the four outgoing feeders by
appending a copy of its original radial sequence downstream of its final customer. The four radial paths of the
extended network are shown in Figure 5. 
Thus, the customer set is $\mathcal N=
 \{1,15,\,
 8,11,10,3,16,17,18,19,
 7,$$12,14,6,5,20$,$21,22,$ $23,24,
 2,9,13,25,26,27\}$. Figure~\ref{fig:simbench_topology} shows the grid model used in this numerical study. Bus~4 is the common feeder head which delivers power to four radial feeders with $26$ customer
buses in total. Its voltage \(v_0(t)\) is an exogenous input applied
identically at the head of each of the four outgoing feeders.
Consequently, the transformer voltage drop and any dependence of
\(v_0(t)\) on the aggregate feeder power are not included in the
sensitivity model. After ordering the buses feeder-wise, the network
sensitivity matrix is therefore
$H  =
    \operatorname{blkdiag}
    \left(
        H_{F_1}, H_{F_2}, H_{F_3}, H_{F_4}
    \right)$,
where each \(H_{F_k}\) is the single line feeder sensitivity derived
in Section~2.

The extended model initially contains eight controllable
inverters, $\mathcal I'
 = \{8,11,16,17,7,20,13,27\}$,
highlighted in pink. Note that their placement is highly nonuniform. Specifically, feeder $\mathrm{F1}$
contains no controllable inverter, while the final nodes of the two
longest feeders remain weakly actuated.  This topology illustrates the sparse inverter placement setting addressed in
Section~5.3. The proposed method certifies all $26$ customer voltages
although only a subset of buses has droop control. In Figures~6 and~7, each controllable inverter can communicate with
all other controllable inverters on the same feeder, while communication
between feeders is excluded.

\begin{figure}[t]
\centering
\begin{tikzpicture}[
    x=0.78cm,
    y=0.8cm,
    bus/.style={
        circle,
        draw,
        thick,
        minimum size=6.2mm,
        inner sep=0pt,
        font=\small
    },
    rootbus/.style={
        circle,
        draw,
        thick,
        fill=gray!20,
        minimum size=6.8mm,
        inner sep=0pt,
        font=\small\bfseries
    },
    pvbus/.style={
        circle,
        draw,
        thick,
        fill=purple!18,
        minimum size=6.2mm,
        inner sep=0pt,
        font=\small\bfseries
    },
    legendroot/.style={
        circle,
        draw,
        thick,
        fill=gray!20,
        minimum size=4.5mm,
        inner sep=0pt
    },
    legendpv/.style={
        circle,
        draw,
        thick,
        fill=purple!18,
        minimum size=4.5mm,
        inner sep=0pt
    },
    legendbus/.style={
        circle,
        draw,
        thick,
        minimum size=4.5mm,
        inner sep=0pt
    },
    line/.style={thick},
    lab/.style={font=\small},
    legendtext/.style={
        font=\scriptsize,
        inner sep=0pt,
        anchor=west
    }
]

% Transformer and root
\node[
    draw,
    thick,
    rectangle,
    rounded corners=1mm,
    minimum width=1.0cm,
    minimum height=0.45cm,
    font=\small
] (tr) at (0,1.0) {Transformer};

\node[rootbus] (b4) at (0,-0.2) {4};
\node[lab, above right=1mm and 1mm of b4] {Root Bus};

\draw[line] (tr) -- (b4);

% First layer from Bus 4
\node[bus]   (b1) at (-3.2,-1.1) {1};
\node[pvbus] (b8) at (-1.05,-1.1) {8};
\node[pvbus] (b7) at (1.05,-1.1) {7};
\node[bus]   (b2) at (3.2,-1.1) {2};

\draw[line] (b4) -- (b1);
\draw[line] (b4) -- (b8);
\draw[line] (b4) -- (b7);
\draw[line] (b4) -- (b2);

% Feeder 1: 4 -> 1 -> 15
\node[bus] (b15) at (-3.2,-2.2) {15};

\draw[line] (b1) -- (b15);

% Feeder 2: 4 -> 8 -> 11 -> 10 -> 3 -> 16 -> 17 -> 18 -> 19
\node[pvbus] (b11) at (-1.05,-2.2) {11};
\node[bus]   (b10) at (-1.05,-3.3) {10};
\node[bus]   (b3)  at (-1.05,-4.4) {3};
\node[pvbus] (b16) at (-1.05,-5.5) {16};
\node[pvbus] (b17) at (-1.05,-6.6) {17};
\node[bus]   (b18) at (-1.05,-7.7) {18};
\node[bus]   (b19) at (-1.05,-8.8) {19};

\draw[line] (b8)  -- (b11);
\draw[line] (b11) -- (b10);
\draw[line] (b10) -- (b3);
\draw[line] (b3)  -- (b16);
\draw[line] (b16) -- (b17);
\draw[line] (b17) -- (b18);
\draw[line] (b18) -- (b19);

% Feeder 3: 4 -> 7 -> 12 -> 14 -> 6 -> 5
%           -> 20 -> 21 -> 22 -> 23 -> 24
\node[bus]   (b12) at (1.05,-2.2) {12};
\node[bus]   (b14) at (1.05,-3.3) {14};
\node[bus]   (b6)  at (1.05,-4.4) {6};
\node[bus]   (b5)  at (1.05,-5.5) {5};
\node[pvbus] (b20) at (1.05,-6.6) {20};
\node[bus]   (b21) at (1.05,-7.7) {21};
\node[bus]   (b22) at (1.05,-8.8) {22};
\node[bus]   (b23) at (1.05,-9.9) {23};
\node[bus]   (b24) at (1.05,-11.0) {24};

\draw[line] (b7)  -- (b12);
\draw[line] (b12) -- (b14);
\draw[line] (b14) -- (b6);
\draw[line] (b6)  -- (b5);
\draw[line] (b5)  -- (b20);
\draw[line] (b20) -- (b21);
\draw[line] (b21) -- (b22);
\draw[line] (b22) -- (b23);
\draw[line] (b23) -- (b24);

% Feeder 4: 4 -> 2 -> 9 -> 13 -> 25 -> 26 -> 27
\node[bus]   (b9)  at (3.2,-2.2) {9};
\node[pvbus] (b13) at (3.2,-3.3) {13};
\node[bus]   (b25) at (3.2,-4.4) {25};
\node[bus]   (b26) at (3.2,-5.5) {26};
\node[pvbus] (b27) at (3.2,-6.6) {27};

\draw[line] (b2)  -- (b9);
\draw[line] (b9)  -- (b13);
\draw[line] (b13) -- (b25);
\draw[line] (b25) -- (b26);
\draw[line] (b26) -- (b27);

% Feeder labels
\node[lab] at (-3.2,-2.9) {F1};
\node[lab] at (-0.45,-9.4) {F2};
\node[lab] at (1.65,-11.5) {F3};
\node[lab] at (3.75,-7.2) {F4};

% Compact single-line legend
\node[legendroot] (legroot) at (-2.75,-12.6) {};
\node[legendtext, right=0.4mm of legroot]
    (legroottext) {root};

\node[legendpv, right=1.5mm of legroottext]
    (legpv) {};
\node[legendtext, right=0.4mm of legpv]
    (legpvtext) {PV bus};

\node[legendbus, right=1.5mm of legpvtext]
    (legbus) {};
\node[legendtext, right=0.4mm of legbus]
    {customer bus};

\end{tikzpicture}

\caption{Network based on the SimBench LV rural feeder topology.
The controllable PV inverter buses are highlighted in light pink.}
\label{fig:simbench_topology}
\end{figure}

The SimBench load and PV trajectories are used without modifying their ordering. The componentwise disturbance bounds are obtained from \texttt{LoadProfile.csv} and
\texttt{RESProfile.csv}. Since the data set does not prescribe smart inverter Volt/VAR
parameters, we assume for each
original PV system that $S_i=\max\{s_{R,i},1.25P_i^{\max}\}$, giving a reactive power reserve of at most 
 $Q_i^{\max}=\sqrt{S_i^2-(P_i^{\max})^2}$.
The droop
slope is selected as $d_i=
 0.70\,
 \dfrac{Q_i^{\max}}
 {\bar v^2-(0.95\bar v)^2}$, while 
the inverter time constants are $\tau_i=10\,\mathrm{s}$, and the
bounded upstream voltage is $v_0(t)=
 \bar v+\dfrac{5}{\sqrt{3}}
 \sin\left(\dfrac{2\pi t}{3600}\right)\mathrm{V}.$

\begin{figure}[h]
    \centering
    \includegraphics[width=0.49\textwidth]{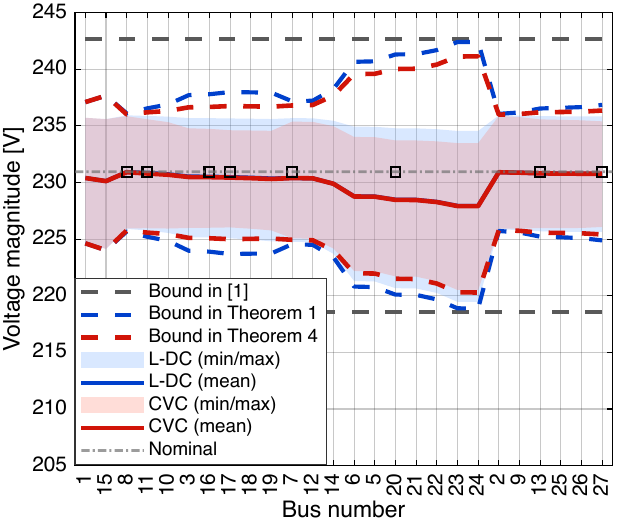}
    \caption{Voltage certificates and simulated voltages for the
initial sparse inverter set \(\mathcal I'\). Buses are ordered
per feeder according to Figure ~5.}
    \label{fig:without_addition}
\end{figure}
\begin{figure}[h]
    \centering
    \includegraphics[width=0.49\textwidth]{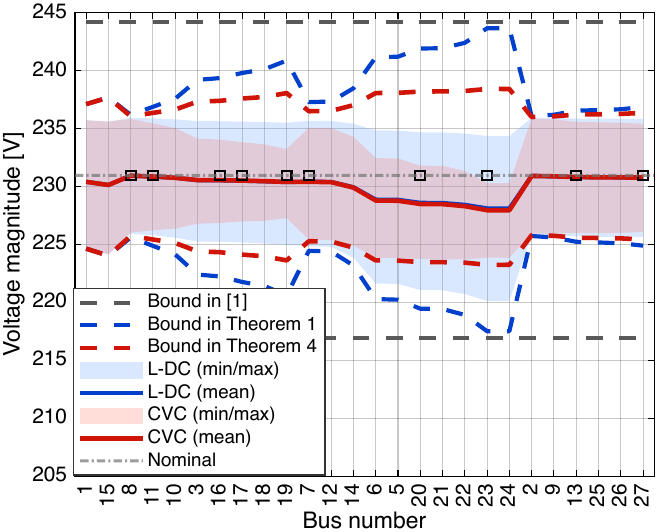}
    \caption{Voltage certificates and simulated voltages after adding
controllable inverters at Buses~19 and~23. Note that, even though the topology for inverters is still sparse, our method exhibits significant improvements downstream and substantially more uniform certificates compared to local droop control. }
    \label{fig:with_addition}
\end{figure}

 An advantage of the heterogeneous certificates in Theorem \ref{thm:heterogeneous-certificate-agent} is that they reveal where the existing inverter placement provides
insufficient voltage control. This can be used for further optimization of the inverter placement problem for better  designs. Developing a detailed inverter placement methodology is outside the
scope of the present contribution and is left for future work. Motivated by this information, two
additional inverters are introduced at Buses~19 and~23, which gives rise to the inverter index set $\mathcal I
 =
 \mathcal I'\cup\{19,23\}.$
These additions provide direct support near the ends of the two longest
feeders while the network still remains sparsely actuated.

Figure~\ref{fig:without_addition} shows the voltage certificates before adding the
two downstream inverters 19 and 23. The homogeneous certificate assigns the same
conservative envelope to every bus, whereas the heterogeneous
certificate of Theorem~1 reveals the spatial structure of the network, i.e., 
the bounds widen towards the ends of the long feeders. The
certificate in Theorem \ref{thm:neighborhood_sparse_synthesis} is tighter, but the simulated voltage ranges of
local droop (L-DC) and the coordinated virtual controller  (CVC) remain relatively
close with only small improvements in both the simulated voltages and the theoretical certificate. This indicates that controller synthesis alone cannot fully
compensate for the limited voltage authority because of
sparse inverter placement, given this particular reactive power capacity. 

Figure~\ref{fig:with_addition} shows the same comparison after
adding controllable inverters at Buses~19 and~23 which again follow the paradigm of actuators obtained by SimBench \cite{meinecke2020simbench}. Again, the heterogeneous certificate does not automatically improve and becomes
particularly wide along the long feeders, since the additional local
droop devices  can also contribute to its worst-case bound. In contrast,
the certificates of Theorem~\ref{thm:neighborhood_sparse_synthesis} remain substantially tighter and more uniform downstream.
Moreover, the CVC minimum and maximum voltage range is visibly narrower than the
L-DC range at the downstream buses, while their mean voltage
profiles remain almost identical. The two figures show that addition of inverters often provides the
necessary downstream voltage capacity, but its benefit is realised
only when the inverter actions are designed jointly via the LP optimization problem $\tilde P$. Thus, our proposed
method uses the additional degrees of freedom to reshape the
network sensitivity, reduce both the voltage certificates and
the voltage trajectories' deviations, and certify all monitored buses while
respecting the prescribed communication structure and inverter
limits.

\section{Conclusion}

We developed heterogeneous all-time voltage certificates for
droop controlled radial feeders and used them to synthesize a provably certified
coordinated droop controller through a linear program that is amenable to  distributed optimization methods. Our program accounts
for inverter limits, prescribed communication structures, customer and renewable energy disturbances, and sparse actuation.
In the 5-customer study, the coordinated controller reduced significantly the
maximum voltage deviation compared with local
droop control. In the 26-bus study, all monitored
voltages were certified using only a sparse set of inverters. The results
show that additional inverters are most effective when their actions are
designed jointly through our proposed coordination scheme.  The development of
fairness aware objectives to prevent improvements at some buses
from being obtained at the expense of others is an interesting future research direction. Finally, we aim to extend this work to unbalanced three-phase
networks. Future work will also focus on the development of a data-driven methodology  based on recent developments in adversarially robust reachability analysis \cite{Pantazis2026,Campi2025} that complements our proposed framework.

\section{Appendix}

\emph{Proof of Lemma \ref{lem:exogenous-bound-customer-i}}: For each customer $i \in\mathcal N$, we have from (\ref{output_dynamics}), 
$y_i=(Hq_g)_i+\phi_i(\rho,q_c)+y_0$.
Based on applying (\ref{eq:recursion}) recursively we obtain $y_i=y_0+\sum_{k=1}^{i}\eta_k$. 
For each \(k\in\{1,\ldots,i\}\), it holds that
\[
\begin{aligned}
&\eta_k
= v_{k-1}^2-v_k^2  \\
&= (v_{k-1}^2-\hat v_k^2)+(\hat v_k^2-v_k^2) \\
&= 2\beta_{k-1}(P_{k-1},Q_{k-1})
+2\beta'_{k-2}(\rho_{k-1},q_{k-1})
-2\beta'_{k-1}(\rho_k,q_k). \\
&=-2R_{k-1}\sum_{j=k}^{N}\rho_j
+
2X_{k-1}\sum_{j=k}^{N}(q_{c,j}-q_{g,j}) \nonumber\\
&\quad
+
2\beta'_{k-2}(\rho_{k-1},q_{k-1})
-
2\beta'_{k-1}(\rho_k,q_k), 
\end{aligned}
\]
where the last equality is obtained by using the equations in $(\ref{eq:Pi})$. 
Substituting this expression into the recursion for $y_i$ gives
\begin{align}
y_i
&=
y_0
-
2\sum_{k=1}^{i}R_{k-1}\sum_{j=k}^{N}\rho_j
+
2\sum_{k=1}^{i}X_{k-1}\sum_{j=k}^{N}(q_{c,j}-q_{g,j})
\nonumber\\
&\quad
+
\sum_{k=1}^{i}
\Big(
2\beta'_{k-2}(\rho_{k-1},q_{k-1})
-
2\beta'_{k-1}(\rho_k,q_k)
\Big). \nonumber \\
&=
y_0
-
2\sum_{k=0}^{i-1}R_k\sum_{j=k+1}^{N}\rho_j
+
2\sum_{k=0}^{i-1}X_k\sum_{j=k+1}^{N}(q_{c,j}-q_{g,j})
\nonumber\\
&\quad
+
\sum_{k=1}^{i}
\Big(
2\beta'_{k-2}(\rho_{k-1},q_{k-1})
-
2\beta'_{k-1}(\rho_k,q_k)
\Big). \nonumber 
\end{align}
The last sum can then be simplified due to cancellation of terms in the sum, thus obtaining:
\begin{align}
&\sum_{k=1}^{i}
\Big(
2\beta'_{k-2}(\rho_{k-1},q_{k-1})
-
2\beta'_{k-1}(\rho_k,q_k)
\Big)
\nonumber\\
&=
2\beta'_{-1}(\rho_0,q_0)
-
2\beta'_{i-1}(\rho_i,q_i)
\nonumber\\
&=
-2\beta'_{i-1}(\rho_i,q_i),
\end{align}
where we used $\beta'_{-1}\equiv 0$.
\begin{comment}
\begin{align}
y_i
&=
y_0
-
2\sum_{k=0}^{i-1}R_k\sum_{j=k+1}^{N}\rho_j \nonumber \\
&+
2\sum_{k=0}^{i-1}X_k\sum_{j=k+1}^{N}(q_{c,j}-q_{g,j})
-
2\beta'_{i-1}(\rho_i,q_i).
\end{align}
\end{comment}
Using $q_i=q_{g,i}-q_{c,i}$ and
$\beta'_{i-1}(\rho_i,q_i)=R'_{i-1}\rho_i+X'_{i-1}q_i$, we obtain:
\begin{align}
y_i
&=
y_0
-
2\sum_{k=0}^{i-1}R_k\sum_{j=k+1}^{N}\rho_j
+
2\sum_{k=0}^{i-1}X_k\sum_{j=k+1}^{N}(q_{c,j}-q_{g,j})
\nonumber\\
&\quad
-
2R'_{i-1}\rho_i
-
2X'_{i-1}q_{g,i}
+
2X'_{i-1}q_{c,i}. \nonumber 
\end{align}
Separating the controlled terms containing $q_g$ from the exogenous terms
containing only $\rho$ and $q_c$, we obtain
\[
y_i
=
\underbrace{
\left(
-2\sum_{k=0}^{i-1}X_k\sum_{j=k+1}^{N}q_{g,j}
-
2X'_{i-1}q_{g,i}
\right)
}_{(Hq_g)_i}
+
\phi_i(\rho,q_c)
+
y_0,
\]
where
\begin{align}
\phi_i(\rho,q_c)
&=
-2\sum_{k=0}^{i-1}R_k\sum_{j=k+1}^{N}\rho_j
+
2\sum_{k=0}^{i-1}X_k\sum_{j=k+1}^{N}q_{c,j}
\nonumber\\
&\quad
-
2R'_{i-1}\rho_i
+
2X'_{i-1}q_{c,i}. \nonumber 
\end{align}
Since $R_k\ge 0$, $X_k\ge 0$, $R'_{i-1}\ge 0$, and $X'_{i-1}\ge 0$ and due to the triangle inequality it then holds that:
\begin{align}
|\phi_i(\rho,q_c)|
&\le
2\left|
\sum_{k=0}^{i-1}R_k\sum_{j=k+1}^{N}\rho_j
\right|
+
2\left|
\sum_{k=0}^{i-1}X_k\sum_{j=k+1}^{N}q_{c,j}
\right|
\nonumber\\
&\quad
+
2R'_{i-1}|\rho_i|
+
2X'_{i-1}|q_{c,i}| \nonumber \\ 
&\le
2\sum_{k=0}^{i-1}
R_k
\left|
\sum_{j=k+1}^{N}\rho_j
\right|
+
2\sum_{k=0}^{i-1}
X_k
\left|
\sum_{j=k+1}^{N}q_{c,j}
\right|
\nonumber\\
&\quad
+
2R'_{i-1}|\rho_i|
+
2X'_{i-1}|q_{c,i}|
\nonumber\\
&\le
2\sum_{k=0}^{i-1}
R_k
\sum_{j=k+1}^{N}|\rho_j|
+
2\sum_{k=0}^{i-1}
X_k
\sum_{j=k+1}^{N}|q_{c,j}|
\nonumber\\
&\quad
+
2R'_{i-1}|\rho_i|
+
2X'_{i-1}|q_{c,i}|.
\end{align}
By Assumption~\ref{assum:exogenous-bounds}, it then holds that for any customer $i \in \mathcal{N}$:
\begin{align}
|\phi_i(\rho(t),q_c(t))|
&\le
2\sum_{k=0}^{i-1}
R_k
\sum_{j=k+1}^{N}\bar\rho_j
+
2\sum_{k=0}^{i-1}
X_k
\sum_{j=k+1}^{N}\bar q_{c,j}
\nonumber\\
&\quad
+
2R'_{i-1}\bar\rho_i
+
2X'_{i-1}\bar q_{c,i}=
\bar\phi_i, 
\end{align}
thus concluding the proof.  \hfill $\blacksquare$

\emph{Proof of Theorem \ref{thm:heterogeneous-certificate-agent}}: For simplicity, we denote $e_j(t):=\exp\!\left(-\frac{t}{\tau_j}\right)$. For each agent $j\in\mathcal N$, the time response of the system according to the inverter dynamics in  (\ref{state_dynamics}) is then given by:
\[
q_{g,j}(t)
=
e_j(t)q_{g,j}(0)
+
\frac{1}{\tau_j}
\int_0^t
e_j(t-s)
K_j(y_j(s))\,ds .
\]
From Assumption \ref{assum:sector-bounded}, it holds that:
\begin{align}
|q_{g,j}(t)|
&\le
e_j(t)|q_{g,j}(0)|
+
\frac{1}{\tau_j}
\int_0^t
e_j(t-s)
\bar K_j\,ds \nonumber \\
&=
e_j(t)|q_{g,j}(0)|
+
\bigl(1-e_j(t)\bigr)\bar K_j \nonumber \\
&\le
\max\{|q_{g,j}(0)|,\bar K_j\}
=
\bar Q_j .
\label{eq:q_agent_uniform_bound}
\end{align}
Applying \eqref{eq:q_agent_uniform_bound} to (\ref{output_dynamics}), it holds that:
\begin{align}
|y_j(s)|
&=
\left|
\sum_{k=1}^N h_{jk}q_{g,k}(s)
+
\phi_j(\rho,q_c)
+
y_0
\right| \nonumber \\
&\le
\sum_{k=1}^N \widetilde h_{jk}|q_{g,k}(s)|
+
\ell_j . \nonumber 
\end{align}
Using \eqref{eq:q_agent_uniform_bound} for all $k\in\mathcal N$ we obtain the inequality:
\begin{align}
|y_j(s)|
\le
\sum_{k=1}^N \widetilde h_{jk}\bar Q_k+\ell_j
=
\gamma_j . \nonumber 
%\label{eq:y_agent_gamma_bound}
\end{align}
Moreover, by Assumption \ref{assum:sector-bounded}, it holds that $|K_j(y_j(s))|
=
|K_j(y_j(s))-K_j(0)|
\le
d_j|y_j(s)|
\le
d_j\gamma_j$ . Since $|K_j(y_j)| \leq  \bar K_j $ for all $y_j \in \mathbb{R}$,
we have that $|K_j(y_j)| \leq m_j$, where $m_j=\min\{\bar{K}_j, d_j\gamma_j\}$.
From the time-response of \eqref{state_dynamics} we obtain the bound:
\begin{align}
|q_{g,j}(t)|
&\le
e_j(t)|q_{g,j}(0)|
+
\frac{m_j}{\tau_j}
\int_0^t
e_j(t-s) \,ds \nonumber \\
&=
e_j(t)|q_{g,j}(0)|
+
\bigl(1-e_j(t)\bigr)m_j .
\label{eq:q_agent_bound}
\end{align}
 Since
$y_i(t)
=
\sum\limits_{j=1}^N h_{ij}q_{g,j}(t)
+
\phi_i(\rho,q_c)
+
y_0$, by analyzing \eqref{output_dynamics} per component,
we have $
|y_i(t)|
\le
\sum\limits_{j=1}^N \widetilde h_{ij}|q_{g,j}(t)|
+
\ell_i$ and using \eqref{eq:q_agent_bound} for each $j\in\mathcal N$ we obtain the inequality:
\[
|y_i(t)|
\le
\sum_{j=1}^N
\widetilde h_{ij}
\left[
e_j(t)|q_{g,j}(0)|
+
\bigl(1-e_j(t)\bigr) m_j
\right]
+
\ell_i .
\]
Finally, since $e_j(t)\in[0,1]$, considering the extended real line $t \in \bar{\mathbb{R}}_{+}=[0, \infty]$, it holds that $e_j(t)|q_{g,j}(0)|
+
\bigl(1-e_j(t)\bigr)m_j
\le
\max\!\left\{
|q_{g,j}(0)|,\,m_j
\right\}.$
Therefore,
\[
|y_i(t)|
\le
\sum_{j=1}^N
\widetilde h_{ij}
\max\!\left\{
|q_{g,j}(0)|,\,
m_j
\right\}
+
\ell_i,
\]
thus concluding the proof. \hfill $\blacksquare$ \\
\emph{Proof of Corollary \ref{cor:homogeneous-special-case-customer-i}}: For notational simplicity, we again denote $e_j(t):=\exp\!\left(-\frac{t}{\tau_j}\right)$. For any customer $i\in\mathcal N$, from 
Theorem~\ref{thm:heterogeneous-certificate-agent}, for all $t\ge 0$,
\begin{align}
|y_i(t)|
&\le
\sum_{j=1}^N
\widetilde h_{ij}
\left(
e_j(t)|q_{g,j}(0)|
+
d_j\bigl(1-e_j(t)\bigr)\gamma_j
\right)
+
\ell_i ,
\label{eq:proof-hom-start}
\end{align}

We now bound each term in \eqref{eq:proof-hom-start} by homogeneous
quantities. Since $e_j(t)\in(0,1]$, it holds that for each $j \in \mathcal{N}$:  $e_j(t)|q_{g,j}(0)|
\le
\|q_g(0)\|_\infty .$
Therefore, since $\sum_{j=1}^N
\widetilde h_{ij} \geq 0$ it holds that:
\[
\sum_{j=1}^N
\widetilde h_{ij}
e_j(t)|q_{g,j}(0)|
\le
\left(\sum_{j=1}^N\widetilde h_{ij}\right)
\|q_g(0)\|_\infty .
\]
By the definition of the induced infinity norm, $\sum_{j=1}^N\widetilde h_{ij}
\le
\|\widetilde H\|_\infty$.
Hence, it holds that 
\[
\sum_{j=1}^N
\widetilde h_{ij}
e_j(t)|q_{g,j}(0)|
\le
\|\widetilde H\|_\infty
\|q_g(0)\|_\infty .
\]

Since $d_j\le d:=\max_{r\in\mathcal N}d_r$
and
$1-e_j(t)\le 1$,
we have $d_j\bigl(1-e_j(t)\bigr)
\le d
\le
\frac{d\tau}{\tau_m}$,
because $\tau/\tau_m\ge 1$. Therefore,
\begin{align}
\sum_{j=1}^N
\widetilde h_{ij}
d_j\bigl(1-e_j(t)\bigr)\gamma_j
&\le
\frac{d\tau}{\tau_m}
\sum_{j=1}^N
\widetilde h_{ij}\gamma_j .
\label{eq:proof-hom-second-term}
\end{align}
It remains to bound $\gamma_j$ uniformly in $j$. By definition,
$
\gamma_j
=
\sum\limits_{k=1}^N\widetilde h_{jk}\bar Q_k+\ell_j$. Since
$
\bar Q_k
=
\max\{|q_{g,k}(0)|,\bar K_k\}
\le
\|q_g(0)\|_\infty+\bar K$,
where $\bar K:=\max_{k\in\mathcal N}\bar K_k$, and since $\ell_j=\bar\phi_j+\bar\epsilon_y
\le
\Delta_\phi+\bar\epsilon_y
=
\bar\ell$,
we obtain
\begin{align}
\gamma_j
&\le
\sum_{k=1}^N
\widetilde h_{jk}
\bigl(\|q_g(0)\|_\infty+\bar K\bigr)
+
\bar\ell
\nonumber\\
&\le
\|\widetilde H\|_\infty
\bigl(\|q_g(0)\|_\infty+\bar K\bigr)
+
\bar\ell .
\label{eq:proof-gamma-hom}
\end{align}
Substituting \eqref{eq:proof-gamma-hom} into
\eqref{eq:proof-hom-second-term} gives
\begin{align}
&\sum_{j=1}^N
\widetilde h_{ij}
d_j\bigl(1-e_j(t)\bigr)\gamma_j
\le
\frac{d\tau}{\tau_m}
\|\widetilde H\|_\infty
\bar\ell \nonumber \\
& \ \ \ \ \ \  + \frac{d\tau}{\tau_m}
\|\widetilde H\|_\infty^2
\bigl(\|q_g(0)\|_\infty+\bar K\bigr).
\end{align}

Finally, since $\ell_i=\bar\phi_i+\bar\epsilon_y
\le
\Delta_\phi+\bar\epsilon_y
=
\bar\ell$,
we conclude from \eqref{eq:proof-hom-start} that
\begin{align}
|y_i(t)|
&\le
\|\widetilde H\|_\infty
\|q_g(0)\|_\infty
+
\frac{d\tau}{\tau_m}
\|\widetilde H\|_\infty
\bar\ell
\nonumber\\
&\quad+
\frac{d\tau}{\tau_m}
\|\widetilde H\|_\infty^2
\bigl(\|q_g(0)\|_\infty+\bar K\bigr)
+
\bar\ell,
\end{align}
thus concluding the proof. 
\hfill $\blacksquare$
\emph{Proof of Lemma 2}: The matrix \(H\) is symmetric and has the form $H=-2S-2\operatorname{diag}(X_0',\dots,X_{N-1}')$, where 
\begin{align*}
S=
\begin{pmatrix}
X_0 & X_0 & \cdots & X_0\\
X_0 & X_0+X_1 & \cdots & X_0+X_1\\
\vdots & \vdots & \ddots & \vdots\\
X_0 & X_0+X_1 & \cdots & \sum_{\ell=0}^{N-1}X_\ell
\end{pmatrix}
\end{align*}

We first show that \(S\succ 0\). To this end, define the \(N\times N\) matrix
$ B := (b_{ij})_{i,j=1}^N$, where 
the entries of \(B\) are defined as \(b_{ij}=1\) whenever \(j\le i\), and \(b_{ij}=0\) whenever \(j>i\)
and $X:=\operatorname{diag}(X_0,\dots,X_{N-1})$.
Then, for every \(i,j\in \mathcal{N}\), it holds for each element of matrix $BXB^\top$,  that:
\begin{align}
(BXB^\top)_{ij}
=
\sum_{m=1}^N B_{im}X_{m-1}B_{jm}
=
\sum_{m=1}^{\min(i,j)}X_{m-1} \nonumber \\
=
\sum_{\ell=0}^{\min(i,j)-1}X_\ell
=
S_{ij}. \nonumber 
\end{align}
The second equality in the derivation above follows by
observing that if \(m>i\) or \(m>j\), then
\(B_{im}B_{jm}=0\), and the corresponding term does not
contribute to the sum. Hence, the contributing terms satisfy
\(m\leq i\) and \(m\leq j\), i.e.,
\(m\leq\min\{i,j\}\). Since
\((BXB^\top)_{ij}=S_{ij}\) for every \(i,j\in\mathcal N\),
it follows that \(S=BXB^\top\). Since \(B\) is unit lower triangular, it is also invertible. Since \(X_i>0\) for all \(i\), it holds that
$X\succ 0$.
Therefore, for every nonzero vector \(v\in\mathbb R^N\), $v^\top Sv
=
v^\top BXB^\top v
=
(B^\top v)^\top X(B^\top v)$. 
Because \(B^\top\) is invertible, \(v\neq 0\) implies \(B^\top v\neq 0\). Since \(X\succ 0\), it holds that $(B^\top v)^\top X(B^\top v)>0$.
Thus $v^\top Sv>0,\forall v\neq 0$, which implies that $S\succ 0$. Next, let \(D':=\operatorname{diag}(X_0',\dots,X_{N-1}')\). Since \(X_i'\ge 0\), we have $D'\succeq 0$.
Hence, for every nonzero \(v\in\mathbb R^N\),
$v^\top Hv
=
-2v^\top Sv-2v^\top D'v$. Now \(v^\top Sv>0\) because \(S\succ 0\), and \(v^\top D'v\ge 0\) because \(D'\succeq 0\). Therefore,
$v^\top Hv<0, \forall v\neq 0$. Thus, \(H\) is negative definite.

\emph{Proof of Lemma \ref{lem:invariance}}: 1) 
Since \(d_i\ge 0\) and \(|F|\ge 0\) elementwise, we have that
$A=D|F|\ge 0$
elementwise. By Assumption \ref{assum:spectral_radius}, $\varrho(A)<1$.
Hence \(I-A\) is nonsingular and the Neumann series $(I-A)^{-1}
=
\sum_{k=0}^{\infty}A^k$
is well-defined, finite, and elementwise nonnegative. Define $r_z
:=
(I-D|F|)^{-1}
\left(
|z(0)|+D\tilde{\ell}
\right).$
Since \((I-D|F|)^{-1}\) is finite and nonnegative elementwise, \(|z(0)|\ge0\), and
\(D\tilde{\ell}\ge0\), it follows that \(r_z\) is finite. Moreover, since \(A=D|F|\), the definition of \(r_z\) gives
$(I-A)r_z
=
|z(0)|+D\tilde{\ell}$, we have that
$r_z
=
Ar_z+|z(0)|+D\tilde{\ell}$.
Because all terms on the right-hand side are componentwise
nonnegative, we obtain $r_z\ge |z(0)|$
and $r_z\ge Ar_z+D\tilde{\ell}$.
Substituting \(A=D|F|\), the last inequality becomes
$r_z\ge D|F|r_z+D\tilde{\ell}$,
which componentwise yields for each $i \in \mathcal{N}$
\[
r_{z,i}
\ge
d_i
\left(
\sum_{j=1}^N |F_{ij}|r_{z,j}
+\tilde{\ell}_i
\right).
\]
Thus the vector \(r_z\) is finite and satisfies the required feasibility constraints. \\
2) Since \(r_{z,i}\ge |z_i(0)|\), the initial condition satisfies
\(z(0)\in\mathcal Z\). Suppose \(z(t)\in\mathcal Z\). Then, for each
\(i\in\mathcal N\), using Assumption \ref{assum:res_bounds}, it holds that:
\[
|y_i(t)|
=
\left|
\sum_{j=1}^N F_{ij}z_j(t)+r_i(t)
\right|
\le
\sum_{j=1}^N |F_{ij}|r_{z,j}+\tilde{\ell}_i.
\]
Since \(K_i(0)=0\) and given Assumption \ref{assum:sector-bounded} we have that:
\begin{align}
|K_i(y_i(t))|
&=
|K_i(y_i(t))-K_i(0)|  \nonumber \\
& \le
d_i |y_i(t)| \nonumber \\
& \le
d_i\left(
\sum_{j=1}^N |F_{ij}|r_{z,j}+\tilde{\ell}_i
\right)
\le r_{z,i}.
\end{align}
Now consider the boundary of the interval
\([-r_{z,i},r_{z,i}]\). If \(z_i(t)=r_{z,i}\), then
\[
\dot z_i(t)
=
-\frac{1}{\tau_i}r_{z,i}
+\frac{1}{\tau_i}K_i(y_i(t))
\le 0,
\]
because \(K_i(y_i(t))\le r_{z,i}\). Similarly, if
\(z_i(t)=-r_{z,i}\), then
\[
\dot z_i(t)
=
\frac{1}{\tau_i}r_{z,i}
+\frac{1}{\tau_i}K_i(y_i(t))
\ge 0,
\]
because \(K_i(y_i(t))\ge -r_{z,i}\). Hence the vector field points
inward on every boundary face of \(\mathcal Z\). Therefore
\(\mathcal Z\) is forward invariant, and for all $t\ge0$, and $i\in\mathcal N$ it holds that $|z_i(t)|\le r_{z,i}$, thus concluding the proof. \hfill 
\(\blacksquare\)

\bibliographystyle{unsrt}
\bibliography{autosam}    % and a bib file to produce the 
                                 % bibliography (preferred). The
                                 % correct style is generated by
                                 % Elsevier at the time of printing.

\end{document}